\documentclass[a4, 12pt,leqno]{amsart}
\usepackage{lmodern}
\usepackage{bbm}

\usepackage{a4wide}
\usepackage{esint}

\usepackage[active]{srcltx}
\usepackage[utf8]{inputenc}
\usepackage[T1]{fontenc}
\usepackage{lipsum}
\usepackage{upgreek}
\usepackage{listings}
\usepackage{wasysym}
\DeclareMathAlphabet{\mathdutchcal}{U}{dutchcal}{m}{n}
\SetMathAlphabet{\mathdutchcal}{bold}{U}{dutchcal}{b}{n}
\DeclareMathAlphabet{\mathdutchbcal}{U}{dutchcal}{b}{n}
\DeclareSymbolFont{myletters}{OML}{ztmcm}{m}{it}
\DeclareMathSymbol{\nicelambda}{\mathord}{myletters}{"15}
\usepackage{amsmath,amssymb,amscd,amsthm,graphicx,enumerate}
\usepackage{mathrsfs}
\usepackage{amssymb,amsthm,a4wide}
\usepackage{nicefrac}
\newcounter{example}
\newenvironment{example}[1]{\refstepcounter{example}\par\medskip
	\noindent \textsc{\small Example~\theexample. #1} \rmfamily\hspace{-2pt}}{\medskip}
\usepackage{pifont}

\usepackage{color}
\usepackage[usenames,dvipsnames]{xcolor}

\definecolor{ultrablue}{rgb}{0.0,0.0, 1}
\definecolor{jigari}{rgb}{0.39,0.0, 0.0}
\usepackage{hyperref}
\hypersetup{
	linktoc=page,
	colorlinks,
	linkcolor={jigari},
	citecolor={jigari},
	urlcolor={jigari}
}
\hypersetup{breaklinks=true}
\allowdisplaybreaks

\usepackage{tikz}
\usetikzlibrary{matrix}
\usepackage[all]{xy}
\usepackage{subfig}

\makeatletter 
\def\l@subsection{\@tocline{2}{0pt}{3pc}{6pc}{}}
\makeatother

\usepackage{enumerate}
\usepackage{nicematrix}
\usepackage{soul}
\usepackage{graphicx}

\usepackage{tikz}
\usepackage{tikzpagenodes}
\usepackage{scrlayer-scrpage}

\usepackage{tikz}
\usetikzlibrary{calc,spy}
\usetikzlibrary{shapes.geometric, arrows}
\usepackage{pgfplots}
\usetikzlibrary{intersections, pgfplots.fillbetween}

\makeatletter
\@namedef{subjclassname@2020}{%
	\textup{2020} Mathematics Subject Classification}
\makeatother

\makeatletter 
\def\l@subsection{\@tocline{2}{0pt}{3pc}{6pc}{}}
 \makeatother

\usepackage[a4paper,bindingoffset=0in,%
left=2cm,right=2cm,top=2.4 cm,bottom=2.5 cm,%
footskip=1mm]{geometry}

\theoremstyle{plain}

\newtheorem{theorem}{Theorem}

\newtheorem*{theorem*}{Main Theorem}
\newtheorem*{corollary*}{Corollary}

\newtheorem{lemma}{Lemma}[section]
\newtheorem{proposition}[lemma]{Proposition}
\newtheorem{corollary}{Corollary}

\theoremstyle{definition}
\newtheorem{definition}[lemma]{Definition}
\newtheorem{remark}[lemma]{Remark}

\theoremstyle{remark}

\numberwithin{equation}{section}

\newcommand{\R}{\mathbb R}

\newcommand{\B}{\mathcal{B}}

\DeclareMathOperator{\diam}{diam}		

\newcommand{\ident}{\raisebox{0pt}{\scalebox{1.1}{$\mathbbm{1}$}}\hspace{-1pt}}

\usepackage[]{stackengine}

\newlength\correct
\newcommand*{\DashedArrow}[1][]{\mathbin{\tikz [baseline=-0.25ex,-latex, dashed,#1] \draw [#1] (0pt,0.5ex) -- (1.3em,0.5ex);}}%

\begin{document}

\title[\tiny A generalization of the inverse mapping theorem in infinite dimensions]{\small  A generalization of the inverse mapping theorem \\ \small in infinite dimensions} 

\author[Sajjad Lakzian]{Sajjad Lakzian}
\address{ -- Sajjad Lakzian \newline \phantom{s} Department of Mathematical Sciences\newline \phantom{s} Isfahan University of  Technology (IUT) \newline \phantom{s} Isfahan 8415683111, Iran}

\email{\href{mailto:slakzian@iut.ac.ir}{slakzian@iut.ac.ir}}

\subjclass[2020]{46Bxx; 46Txx}
\keywords{Banach spaces, inverse mapping, Fr\'echet derivative, everywhere differentiable, Radon-Nikodym property, fixed point property.}

%
%


\maketitle

\begin{abstract}
\textsl{We present a generalization of the inverse mapping theorem, where variations of a weaker non-expansiveness property (referred to as property ${\sf A}$) replace the key $\mathsf{C}^1$ condition. We also obtain inverse mapping theorems that can be applied to non-smooth maps. Also as a by-product of the generalized inverse mapping theorem, we prove generalizations of the implicit function theorem and existence and uniqueness theorem of abstract PDE systems as well. 
}
\end{abstract}
\date{\today}

\section{Introduction and Main Results}
\par Let $\left(\B, \|\cdot\|\right)$ be a Banach space. The classical inverse mapping theorem (IMT) states that a differentiable $\mathsf{C}^1$ map $f:U\subset \B \to \B$ (where $U$ is an open domain) with invertible (Frech\'et) derivatives everywhere, is a local diffeomorphism. It is a folklore fact that the optimal regularity requirements for the IMT to hold, must strictly lie somewhere between mere differentiability and being $\mathsf{C}^1$. 
\par The continuity of the Fr\'echet derivative is a key assumption that can be removed when $\B$ is finite dimensional; this is attributed to \v{C}ernavski\u{\i}~\cite{Cer} and is known as the everywhere differentiable IMT. Alternative proofs were presented in~\cite{JSR, Vais}. All the proofs in the aforementioned literature heavily rely on the local compactness of (a finite dimensional) $\B$ which characteristically fails in infinite dimensions. Yet as we will see, still local weak compactness can be utilized to get a generalized IMT.
\par There is an extensive literature concerning generalizations of IMT ``beyond the  $\mathsf{C}^1$ realm in infinite dimensions'' and/or ``beyond the everywhere differentiable with non-degenrate derivative in finite dimensions''. Three major approaches are to either
\begin{itemize}
	\item \textsl{assume the non-degeneracy of stronger notions of derivatives}: examples in this direction are abundant. We will mention a few important ones. Clarke's IMT in finite dimensions that applies to Lipschitz maps and only requires the non-degeneracy of Clarke's generalized derivatives~\cite{Clarke}. In \cite{Feckan}, an IMT is proven that works for continuous self maps of reflexive Banach spaces yet it requires the non-degeneracy of a set-valued derivative. Using strong derivatives (or strict derivative), there is a host of IMTs for set-valued maps expressed in terms of strong and metric regularity including Bartle-Graves and Lyusternik-Graves theorems; see~\cite{Dontchev}.
	\item \textsl{to consider weak derivatives but imposing stronger non-degeneracy conditions}: This approach works best in finite dimensions and usually involves integrability assumptions on the distortion of maps. For example local inversion of Sobolev maps falls in this category. See~e.g.~\cite{FG,HK} and the references therein.  
	\item \textsl{to impose appropriate contraction assumptions that are weaker than being $\mathsf{C}^1$}: generalizations that utilize this approach, typically require imposing some sort of a ``contraction'' assumption that will allow the use of the Banach fixed point theorem. For example, see~\cite{Howard} for an overview of the proof of a standard version of IMT for Lipschitz maps with this approach.
\end{itemize}
\par As previously mentioned, infinite dimensional Banach spaces are not locally compact but a large class of them consists of those that are locally weakly compact. Our aim in these notes, is to take the third approach by imposing a (quasi)-non-expansiveness property (unlike the previous works in which strict contraction is required) in combination with weak local compactness. 
\subsection{Property A}
\hfill\\
\noindent\underline{\small \emph{\textbf{Notation:}}} Throughout, $\B_i$s denote Banach spaces. Unless otherwise stated, $f: U \subset \B_1 \to \B_2$ will denote a map from an open subset of $\B_1$ to $\B_2$. The domain of $f$ will be denoted by $\mathsf{Dom}(f)$, mainly to emphasize the cases where the domain is not necessarily open.

\par It is easy to see that for a $\mathsf{C}^1$ map $f$ with an invertible derivative, the following two properties hold true:
\begin{itemize}
	\item For every $a$, the function $x \mapsto x - (D_af)^{-1}f(x)$ is $L_a$-Lipschitz for some $L_a<1$ on a ball $B_{r_a}(a)$ for some sufficiently small $r_a$;
	\item By making $r_a$ smaller if necessary, for all $r<r_a$, and all $y\in B_{\nicefrac{r}{2}}(a)$, the map
	\[
	g_{a,y}: x \mapsto x-(D_af)^{-1}(f(x)-y)
	\]
	is a contracting self-map of $B_r(a)$, see Remark~\ref{rem:C1}. The Lipschitz constant $L_a$ can be chosen arbitrarily close to zero. 
\end{itemize}

Note the elementary fact that $B_r(a)$ is convex. Based on these observations, we introduce the following properties (that will replace $\mathsf{C}^1$).
\begin{definition}[Property ${\sf A}$]\label{defn:prop-a}
	$f: \mathsf{Dom}(f) \subset \B_1 \to \B_2$ is said to satisfy strong ${\sf A}$, ${\sf A}$ or weak ${\sf A}$ (respectively) at an interior point $a$ of the domain, if there exist a radius $s_{a,C_a}>0$, a bounded closed convex neighborhood $C_a$ of $a$, and a continuous injective map $A_{a,C_a}: \B_2 \to \B_1$ such that for all $y\in B_{s_{a,C_a}}(f(a))$, the map
	$
	\tilde{f}_{a,y,A_{a,C_a}}: x \mapsto x-A_{a,C_a}(f(x)-y)
	$
	is respectively 
\begin{itemize}
		\item a contractive self map of $C_a$ i.e. a map with a Lipschitz constant less than $1$. 
	 \item a non-expansive self map of $C_a$ i.e. a map with a Lipschitz constant $1$;
		\item either without fixed points or is "quasi-nonexpansive" (see \textsection\thinspace\ref{subsec:qne} for the definition).
\end{itemize}
\end{definition}	
\noindent\underline{\small \emph{\textbf{Convention:}}} At a general point $a$ (not necessarily in the interior) of the domain, the aforementioned properties hold if $f$ admits a local extension that satisfies the said property at $a$. 
\newline\underline{\small \emph{\textbf{Terminology:}}} When the injective maps $A_{a,C_a}$ can be taken to be linear, we refer to the property as (strong, weak) ${\sf LinA}$.

\par Based on the observation preceding to Definition~\ref{defn:prop-a}, the property ${\sf A}$ for a $\mathsf{C}^1$ map holds on all sufficiently small scales (determined locally uniformly in $a$). In the following definition we introduce a generalization of this property.
\begin{definition}
	Suppose $f: U \subset \B_1 \to \B_2$ satisfies (strong, weak) ${\sf A}$ on a set $\mathcal{R}\subset U$. Let $\mathscr{C}$ denote the collection of the interiors of all possible convex subsets $C_a$ for all $a\in \mathcal{R}$ that satisfy the Definition \ref{defn:prop-a}. 
\begin{itemize}
\item We say that $f$ satisfies property (strong, weak) ${\sf A}$ on $\mathcal{R}$ on all scales, if $\mathscr{C}$ forms a basis for the topology of $U$. 
\item We say $f$ satisfies (strong, weak) ${\sf A}$ on $\mathcal{R}$, (locally) uniformly on all scales when there exist (locally) uniform (in $a$) parameters $\alpha_a,\beta_a, \eta_a,\gamma_a >0$ with $\eta_a \le \gamma_a$ and a locally uniform radius $r_a$, such that for all $r\le r_a$, there exists a convex set $C_a \in \mathscr{C}$ with
\[
\mathrm{dist}(a,\partial C_a) \ge \beta_a \diam(C_a), \quad s_{a,C_a} \ge \alpha_a \diam(C_a), \quad \text{and}, \quad  \eta_ar \le \diam(C_a) \le \gamma_ar.
\]
\end{itemize}
\noindent\underline{\small \emph{\textbf{Convention:}}} For general domains (not necessarily open), we say $f$ satisfies (strong, weak) ${\sf A}$ on all scales when it admits local extensions that do so. Similarly one defines (strong, weak) ${\sf A}$ locally uniformly on all scales for general domains. 
\end{definition}

\begin{lemma}[pairing property]\label{lem:pair}
	Suppose $f^1: \B_1 \to \B_2$ satisfies (weak,strong) $\textsf{A}$ and $f^2: \B_3 \to \B_4$ is $\mathsf{C}^1$  then 
\begin{enumerate}
\item the map $(f^1,f^2): \B_1\times \B_3 \to \B_2 \times \B_4$ satisfies (weak, strong)  $\textsf{A}$;
\item if $f^2$ satisfies (weak, strong)  $\textsf{A}$ everywhere on all scales then so does $(f^1,f^2)$. 
\end{enumerate} 
\end{lemma}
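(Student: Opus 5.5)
The plan is to build the data for the product map $F=(f^1,f^2)$ at a point $(a,b)$ componentwise. We take the convex set to be a product $C=C^1_a\times C^2_b$ and the injective map to be $A=A^1\times A^2$, i.e. $A(u,v)=(A^1u,A^2v)$. We equip $\B_1\times\B_3$ and $\B_2\times\B_4$ with the max norm $\|(x,z)\|=\max\{\|x\|,\|z\|\}$. Any other product norm is equivalent, and the statement is insensitive to this choice once the norm on the product is fixed this way. With these choices $C$ is a bounded closed convex neighborhood of $(a,b)$, $A$ is continuous and injective, and $B_s((f^1(a),f^2(b)))=B_s(f^1(a))\times B_s(f^2(b))$. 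Taking $s=\min\{s^1,s^2\}$, every target $y=(y_1,y_2)$ in this ball has $y_i$ in the admissible balls for each factor. The key identity is that the auxiliary map splits:
\[
\tilde F_{(a,b),y,A}(x,z)=\bigl(\tilde f^1_{a,y_1,A^1}(x),\ \tilde f^2_{b,y_2,A^2}(z)\bigr).
\]

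For the factor $f^2$ in part (1), we use the $\mathsf{C}^1$ observation preceding Definition~\ref{defn:prop-a} and Remark~\ref{rem:C1}. There are two cases.
\begin{itemize}
\item If $D_bf^2$ is invertible, take $A^2=(D_bf^2)^{-1}$ and $C^2_b=\overline{B_r(b)}$. Then $\tilde f^2$ is a contracting self-map with constant $L_b<1$ for all $y_2\in B_{r/2}(f^2(b))$, since contractive self-maps of open balls extend to closed ones by continuity.
\item If $D_bf^2$ is not invertible, this must be addressed. The lemma presumably intends $f^2$ to be $\mathsf{C}^1$ with invertible derivative, as in the IMT setting, and I would state this hypothesis explicitly.
\end{itemize}

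The product then preserves each class. For contractive and non-expansive maps this follows from the max norm: $\mathrm{Lip}(\tilde F)\le\max\{\mathrm{Lip}(\tilde f^1),\mathrm{Lip}(\tilde f^2)\}$, which is $<1$ in the strong case and $\le1$ in the ${\sf A}$ case. Self-mapping of $C$ is immediate from self-mapping of each factor. For weak ${\sf A}$, a fixed point of $\tilde F$ is a pair of fixed points, and $\tilde f^2$ has one by Banach. So either $\tilde f^1$ has no fixed point, and then neither does $\tilde F$, or $\tilde f^1$ is quasi-nonexpansive. In the latter case the fixed set of $\tilde F$ is $\mathrm{Fix}(\tilde f^1)\times\{z^*\}$. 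For a fixed point $(p,z^*)$ we get $\|\tilde F(x,z)-(p,z^*)\|=\max\{\|\tilde f^1(x)-p\|,\|\tilde f^2(z)-z^*\|\}\le\|(x,z)-(p,z^*)\|$, so $\tilde F$ is quasi-nonexpansive.

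For part (2), the bases $\mathscr C^1,\mathscr C^2$ of admissible convex sets produce product sets $\mathrm{int}(C^1\times C^2)=\mathrm{int}C^1\times\mathrm{int}C^2$. Products of basis elements form a basis of the product topology, so $F$ satisfies the property on all scales. For the uniform version, choose $C^1,C^2$ at comparable scale $r$. Then $\diam(C^1\times C^2)=\max\{\diam C^1,\diam C^2\}$ lies in $[\min(\eta)r,\max(\gamma)r]$. The distance to the boundary is $\min$ of the two factor distances, hence at least $\min\beta\cdot\min\eta\, r\ge(\min\beta\min\eta/\max\gamma)\diam C$. The same kind of bound holds for $s$. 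So all parameters remain locally uniform.

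The main obstacle I expect is the quasi-nonexpansive case, since its definition (in \S\ref{subsec:qne}) is not yet visible. If it is phrased differently, for instance relative to a chosen fixed point or with a modulus, the max-norm splitting argument above has to be adapted to that definition. The non-invertible derivative issue in part (1) is the other point to flag.
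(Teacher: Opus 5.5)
Your proof follows the paper's route: a product convex set (the paper uses $C_a\times B_{\tau_a}(b)$ with $\tau_a=\tfrac12\diam C_a$), the product injective map $(A_a,(D_bf^2)^{-1})$, the componentwise splitting of $\tilde F$, a factorwise check of each class, and products of bases with parameter bookkeeping for part (2). The differences are minor and mostly in your favor. The paper uses the $\ell^2$ product norm; your estimates hold verbatim for it, and it is the better choice downstream because the max norm destroys strict convexity, so drop the ``equivalent norms'' justification, which is not valid for Lipschitz constants. The paper asserts that the product's Lipschitz constant is the \emph{minimum} of the factors' constants, whereas your maximum is the correct bound. Finally, your invertibility caveat on $D_bf^2$ is exactly what the paper tacitly assumes when it invokes Remark~\ref{rem:C1}.
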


\begin{proof}[\footnotesize \textbf{Proof}]

\par Based on Remark~\ref{rem:C1}, $f^2$ satisfies strong $\textsf{A}$ everywhere on all scales. Let $\tau_a = \frac{1}{2}\diam(C_a)$. 

\par Let $a \in \B_1$ and $b \in \B_3$. 
Note that $C_a \times B_{\tau_a}(b)$ is also a bounded and convex set with 
\[
\diam\left( C_a \times B_{\tau_a}(b) \right) = \sqrt{2}\diam\left(C_{a} \right).
\]

\par It is straightforward to see that
\[
\widetilde{(f^1,f^2)}_{(a,b), (y_1,y_2), (A_{a}, (D_bf^2)^{-1}), C_{a}\times B_{\tau_a}(b)} = \left(\widetilde{f^1}_{a,y_1,A_{a,C_{a}}}, \widetilde{f^2}_{a_2,y_2,(D_bf^2)^{-1}} \right),
\] 
for all $(y_1,y_2) \in B_{s}((a,b))$ for sufficiently small $s$ depending on $C_b$ and $\diam(C_b)$ only.  

\par Item (1) follows due to the following elementary facts: 
\begin{itemize}
	\item The Lipschitz constant of $(\tilde{f^1}, \tilde{f}^2)$ is the minimum of the Lipschitz constants of the constituent maps
	\item The map $(\tilde{f^1}, \tilde{f}^2)$ has a fixed point when both $\tilde{f^1}$ and $\tilde{f^2}$ have fixed points.
	\item If $p_1$ and $p_2$ are respective fixed points then
	\begin{align*}
	\|(\tilde{f^1}(x_1), \tilde{f}^2(x_2)) - (p_1,p_2) \| &= \sqrt{\|f^1(x_1)- p_1\|^2 + \|f^2(x_2) - p_2\|^2} \\
	&\le \sqrt{\|x_1- p_1\|^2 + \|x_2 - p_2\|^2}\\
	& = \|(x_1,x_2) - (p_1,p_2)\|
	\end{align*}
\end{itemize}
 
\par Item (2) follows by considering the following facts: 
\begin{itemize}
	\item Cartesian product of two topology bases is a topology basis for the product topology;
	\item $\displaystyle \mathrm{dist}\left((a,b), \partial\left(C_{a}\times B_{\tau_a}(b)\right)\right) \ge \min\left\{\beta_{a}, \beta_b = \frac{1}{2}\right\}\diam(C_a)$
	\item $\displaystyle s_{(a,b), C_{a}\times B_{\tau_a}(b)} \ge \min\left\{\alpha_{a}, \frac{1}{2} \right\} \diam{C_a}$
\item $\displaystyle \frac{1}{\sqrt{2}} \min\left\{\eta_a, 1\right\}r\le \diam\left( C_a \times B_{\tau_a}(b)  \right) \le \sqrt{2}\max\left\{\gamma_a, 1\right\}r$
\end{itemize}   
Hence one can take
\begin{align*}
\beta_{(a,b)} &= \frac{1}{\sqrt{2}}\min\{\beta_{a}, \beta_b = \frac{1}{2}\}, \quad \alpha_{(a,b)} = \frac{1}{\sqrt{2}} \min\left\{\alpha_{a}, \frac{1}{2} \right\},\\
 \eta_{(a,b)} &= \frac{1}{\sqrt{2}} \min\left\{\eta_a, 1\right\}, \quad\quad\quad \;\hspace{1pt}\gamma_{(a,b)} = \sqrt{2}\max\left\{\gamma_a, 1\right\}.
\end{align*}
\end{proof}

\subsection*{Relations with other regularity conditions}
Note that we do not require $f$ to be continuous, however, (strong) $\textsf{A}$ automatically implies continuity since for every $x_0$, continuity of 
\[
\tilde{f}_{a,y,A_{a,C_a}}
\] 
and $A_{a,C_a}$ imply
\[
A_{a,C_a}(f(x_0^+) - y) = A_{a,C_a} (f(x_-) - y)
\]
and by injectivity of $A_{a,C_a}$, we deduce $f({x_0}^+) = f({x_0}_-)$. 

\par Next let us note that the property weak $\textsf{A}$ does not require $f$ to even be continuous or have continuous inverse (let alone Lipschitz or metric regular) as the following example illustrates.
\begin{example}\phantom{}\hspace{-8pt} Let $c>0$. Let $h_{a}:\R \to \R$ denote the function
	\[
	h_{a}(x) := \begin{cases}  
		x-a - \frac{c}{2} & x \le a\\
		x-a + \frac{c}{2} & x > a 
		\end{cases}
	\] 
Let $A(x) = x$. Then
\[
\tilde{h}_y: x \mapsto x - A(f(x) - y)
\] 
is given by
\[
\tilde{h}_y(x) = \begin{cases} 
	a+\frac{c}{2}+y   & x \le a\\
	a-\frac{c}{2} + y & x > a
	\end{cases}
\]
hence for sufficiently small $y$ ($|y|< \frac{c}{2}$), $\tilde{h}_y: [a-c,a+c] \to [a-c,a+c]$ does not have fixed points. Hence $h_a$ satisfies a weak $\textsf{A}$ at $a$.
\label{ex:dis-cont} 
\end{example} 

Note that in Example~\ref{ex:dis-cont}, $h_a$ is invertible with a discontinuous inverse thus $h$ also fails to be metric regular at $a$.

The following example illustrates that a non-Lipschitz function can satisfy strong $\text{A}$.

\par Understanding a function that satisfies property (weak, strong) $\textsf{A}$ on all scales and its regularity is a challenging question that we do not have a definite answer for. As far as the inverse function theorem in concerned, ``$\mathsf{C}^1$ + invertible derivative'' is stronger than property $\text{A}$ as in argues in the following Remark.

\begin{remark}\label{rem:C1}
	A $\mathsf{C}^1$ map $f$ with an invertible derivative satisfies strong ${\sf LinA}$, locally uniformly on all scales. To see this, we first note that by differentiability, it holds
	\begin{align}\label{eq:1}
	x-a - (D_af)^{-1}(f(x)-y) = o(\|x-a\|) + (D_af)^{-1}(y - f(a)), \quad \text{as} \quad x \to 0.
	\end{align}
Thus, there exists an $r_a>0$ such that for all $r\le r_a$, and $y \in B_{\nicefrac{r}{2}}(f(a))$, the map
	$
	\tilde{f}_{a,y,(Df)^{-1}} 
	$
	is a self map of $B_r(a)$ and $\|D\tilde{f}_{a,y,(Df)^{-1}}\|\le \tau_a 
	<1$ for some $\tau_a \in (0,1)$. This means $\tilde{f}_{a,y,(Df)^{-1}}$ is a contractive map. 
\par It is a well-known fact that continuous differentiability is equivalent to locally uniform differentiability (i.e. ~\eqref{eq:1} holds locally uniformly in $a$); e.g. see ~\cite[Page 35]{yamamuro}. This implies that we can choose $r_a$ locally uniformly in $a$. 
\par Taking the convex sets $C_a$ to be $B_r(a)$ for all sufficiently small $r$ and letting $\beta_a = \alpha_a = \frac{1}{2}$ and $\eta_a = \gamma_a = 1$, the claim follows. 
\par Therefore, property ${\sf A}$ locally uniformly on all scales is weaker than $\mathsf{C}^1$ with invertible derivative. For Lipschitz maps, the satisfaction of $\textsf{A}$ on a locally uniform scale is again an intricate issue that will not be explored in these notes. 
\end{remark}

\begin{remark}
\par Note that the first part of the argument in Remark~\ref{rem:C1} only relies on the differentiability at $a$. Therefore, the same argument shows that if $f$ is a Lipschitz map with a	 non-degenerate Clarke's generalized Jacobian, then it satisfies property $\text{A}$ at least at all of its differentiability points. By Rademacher's theorem, this means that 	at almost every point when the Banach spaces $\B_1$ and $\B_2$ are finite dimensional. In the infinite dimensions, the latter implies that $f$ satisfies $\text{A}$ on a dense open subset of $\B_1$ provided $\B_1$ is norm differentiable or Asplund; this is indeed according to the infinite dimensional generalization of the Rademacher's theorem; see the seminal work~\cite{Preiss}.
\end{remark}
\subsection{Main Results}
\hfill\newline
\noindent\underline{\small \emph{\textbf{Convention:}}} $f$ is said to be differentiable at a general point $a$ (not necessarily an interior point) of the domain, when $f$ admits a local extension at $a$ that is differentiable. We say $f$ has an invertible derivative at $a$, when at least one of the said local extensions around $a$ does so. 
\begin{definition}[the fixed point property (${\sf FPP}$)]
	A Banach space $\B$ is said to have the fixed point property if any continuous non-expansive self mappings $f:C \to C$, for bounded closed convex subsets $C$, has at least one fixed point.
\end{definition}
\begin{theorem}[Generalized IMT]\label{thrm:main-1}
Let $f: \mathsf{Dom}(f) \subset \B_1 \to \B_2$ be an everywhere differentiable map with invertible derivative. Then $f$ is a local diffeomorphism provided any of the following items hold
	\begin{enumerate}
		\item if $f$ satisfies strong ${\sf A}$ everywhere on all scales;
		\item If $\B_1$ is locally weakly compact and $f$ is weak-weak continuous and satisfies ${\sf A}$ everywhere on all scales;
		\item If $\B_1$ is locally weakly compact with ${\sf FPP}$ and $f$ satisfies ${\sf A}$ everywhere on all scales; 
		\item $\B_1$ is strictly convex with ${\sf FPP}$ and $f$ satisfies weak ${\sf A}$ everywhere on all scales. 
	\end{enumerate} 
\end{theorem}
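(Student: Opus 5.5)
The plan is to show that $f$ is a local homeomorphism near every point $a$, and then get smoothness of the local inverse from the everywhere-invertible derivative. For local homeomorphism we need local injectivity, local openness (surjectivity onto a neighborhood of $f(a)$), and continuity of the inverse. The link between fixed points and preimages is immediate: since $A_{a,C_a}$ is injective, $x$ is a fixed point of $\tilde f_{a,y,A_{a,C_a}}$ exactly when $f(x)=y$. So for $y\in B_{s_{a,C_a}}(f(a))$ the preimages of $y$ in $C_a$ are exactly the fixed points of $\tilde f_{a,y}$.

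\textbf{Existence and uniqueness of preimages, case by case.}

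In case (1), Banach's fixed point theorem applies to the contraction $\tilde f_{a,y}$ on the closed set $C_a$. This gives existence and uniqueness of $x\in C_a$ with $f(x)=y$.

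In case (2), $C_a$ is bounded, closed and convex, so it is weakly closed. Local weak compactness, after shrinking $C_a$ using the all-scales hypothesis, makes $C_a$ weakly compact. The map $\tilde f_{a,y}$ is weak--weak continuous as long as $A_{a,C_a}$ is; I would take this from the weak--weak continuity of $f$ together with continuity of $A$ (for linear $A$ it is automatic, and in general I would assume it). The Schauder--Tychonoff fixed point theorem in the locally convex weak topology then gives existence.

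In case (3), the fixed point property applied to the non-expansive map $\tilde f_{a,y}$ on $C_a$ gives existence.

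In case (4), the fixed point property again gives existence whenever $\tilde f_{a,y}$ is non-expansive. When it is only quasi-nonexpansive, strict convexity makes its fixed point set convex and closed. In every case, uniqueness of the preimage, i.e.\ local injectivity, must come separately.

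\textbf{Local injectivity.} For this I would use invertibility of $D_xf$ at every point, as in Remark~\ref{rem:C1}: differentiability gives
\[
\|f(x')-f(x)-D_xf(x'-x)\|=o(\|x'-x\|),
\]
so $f$ is injective on some punctured ball around each $x$ in the sense that $f(x')\neq f(x)$ for $0<\|x'-x\|<\rho_x$. This shows the fixed point set of $\tilde f_{a,y}$ is discrete. A nonempty convex set is discrete only if it is a single point, so in case (4) there is a unique preimage in $C_a$. In case (3) the fixed point set of a non-expansive map in a strictly convex space is also convex; without strict convexity, I would instead use that property ${\sf A}$ holds on all scales to pick $C_a$ inside $B_{\rho_a}(a)$. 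In case (2) I would do the same, using the topology-basis condition on $\mathscr C$.

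\textbf{Continuity of the inverse.} Define $g(y)$ as the unique fixed point in $C_a$ for $y\in B_{s}(f(a))$, so $g$ is a right inverse of $f$ there, and $f$ is injective on $C_a$. To show $g$ is continuous at $y_0$, use the all-scales property at the point $g(y_0)$: for every small convex $C'\ni g(y_0)$ from the basis $\mathscr C$, all $y$ near $y_0$ have a preimage in $C'$. By injectivity on $C_a$ that preimage is $g(y)$, so $g(y)\in C'$, which is exactly continuity. Hence $f|_{\mathrm{int}\,C_a\cap f^{-1}(B_s)}$ is a homeomorphism onto an open set.

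\textbf{Differentiability of the inverse.} The standard argument works: write $x=g(y)$, $x_0=g(y_0)$. Invertibility of $D_{x_0}f$ gives $\|x-x_0\|\le 2\|(D_{x_0}f)^{-1}\|\,\|y-y_0\|$ once $x$ is close to $x_0$, which continuity of $g$ guarantees. Then
\[
g(y)-g(y_0)-(D_{x_0}f)^{-1}(y-y_0)=o(\|y-y_0\|),
\]
so $g$ is differentiable with $D_yg=(D_{g(y)}f)^{-1}$.

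The main obstacle is case (2). Obtaining weak compactness of $C_a$ and weak continuity of $\tilde f$ needs care: either restrict to linear $A$ (so that ${\sf LinA}$ suffices), or argue via Tychonoff's theorem using that $A$ is bounded and weak--weak continuous. I would flag this as the point where an extra assumption may be needed.
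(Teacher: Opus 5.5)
Your local injectivity step has a genuine gap in cases (2) and (3). Choosing $C_a\subset B_{\rho_a}(a)$ only shows $f^{-1}(f(a))\cap C_a=\{a\}$. It does not rule out $x_1\neq x_2$ in $C_a$ with $f(x_1)=f(x_2)=y\neq f(a)$, because $\rho_{x_1}$ may be far smaller than $\|x_1-x_2\|$. Pointwise differentiability gives no local uniformity of the radii $\rho_x$; that uniformity is exactly what $\mathsf{C}^1$ supplied and what this theorem gives up. Discreteness of fibres has to be combined with a structural fact about $\mathrm{Fix}(\tilde f_{a,y})=f^{-1}(y)\cap C_a$. The paper does this through Proposition~\ref{prop:key-2} (Bruck) and Lemma~\ref{lemma:key-3}.

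Convexity of that set is only guaranteed under strict convexity. In $(\mathbb{R}^2,\|\cdot\|_\infty)$ the nonexpansive self-map $(x_1,x_2)\mapsto(x_1,|x_1|)$ of $[-1,1]^2$ has a non-convex fixed point set. Connectedness is enough, though, and under your hypotheses there is a direct argument. Suppose $p\neq q$ are fixed by $T=\tilde f_{a,y}$, and set $M=\{x\in C_a:\ \|x-p\|\le\tfrac12\|p-q\|,\ \|x-q\|\le\tfrac12\|p-q\|\}$. This set is nonempty (it contains $\tfrac{p+q}{2}$), bounded, closed, convex and $T$-invariant. So ${\sf FPP}$ in case (3), or Schauder--Tychonoff on the weakly compact $M$ in case (2), gives a fixed point $m\in M$, necessarily with $\|m-p\|=\tfrac12\|p-q\|$. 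Repeating with the pair $(p,m)$ produces fixed points different from $p$ that converge to $p$. This contradicts the isolation of $p$ in $f^{-1}(y)$, which you already established.

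There is a second gap in case (4), concerning existence rather than uniqueness. Weak ${\sf A}$ does not make $\tilde f_{a,y}$ nonexpansive, and it explicitly allows $\tilde f_{a,y}$ to have no fixed points. So ${\sf FPP}$ never applies, and nothing produces a preimage of $y\in B_{s}(f(a))$. Your map $g$ is then undefined on part of the ball, and your continuity/openness step has no input. Your convexity-plus-discreteness argument does give uniqueness of preimages in $C_a$, exactly as in the paper. But without invariance of domain, local injectivity alone does not yield openness in infinite dimensions. The paper's Lemma~\ref{lemma:key-1} invokes ${\sf FPP}$ for hypothesis (4) in the same way, so you inherited this hole rather than introduced it, but it is still a hole.

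The rest is sound:
\begin{itemize}
\item case (1);
\item continuity of the inverse on $\mathrm{int}\,C_a$;
\item differentiability of the inverse (your version of Lemma~\ref{lemma:key-4});
\item your caveat that case (2) needs weak--weak continuity of $A_{a,C_a}$, which the paper passes over.
\end{itemize}
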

\begin{remark}
A large class of Banach spaces $\B_1$ that the last theorem applies to, are $k$-uniformly rotund spaces for $k \in \mathbb{N}$. These spaces enjoy ${\sf FPP}$, see~\textsection\thinspace\ref{subsec:qne}. These are also strictly convex and reflexive therefore, they are locally weakly compact, see~\textsection\thinspace\ref{subsec:wt}. 
\end{remark}
\par The following two corollaries are generalizations of Hadamard type global inverse mapping theorems, see~\cite{Gutu} for a survey on global inverse mapping theorems. 
\begin{corollary}\label{corol:main-1}
	If $f: \mathsf{Dom}(f) \subset \B_1 \to \B_2$ is as in the Main Theorem, and is furthermore either
	\begin{enumerate}
		\item proper
		\item[]or,
		\item with compact domain
	\end{enumerate}
	then, it is a finite-sheeted covering map. 
	\par In particular if in addition, $\mathrm{image}(f)$ is simply connected, then $f$ is a (global) diffeomorphism. 
\end{corollary}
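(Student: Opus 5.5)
The plan is to reduce Corollary~\ref{corol:main-1} to the classical fact that a proper local homeomorphism between path-connected, locally path-connected Hausdorff spaces is a finite-sheeted covering map. By Theorem~\ref{thrm:main-1}, $f$ is a local diffeomorphism, and in particular a local homeomorphism onto its image. Hence $f$ is an open map, and $\mathrm{image}(f)$ is open in $\B_2$. As an open subset of a Banach space, it is locally path-connected. For the covering statement we work over each path component of the image. For the global statement we use that $\mathrm{image}(f)$ is assumed simply connected, hence path-connected.

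First we reduce case (2) to case (1). If $\mathsf{Dom}(f)$ is compact, then $f$ is continuous, being differentiable. So for every compact $K\subset \mathrm{image}(f)$, the preimage $f^{-1}(K)$ is closed in a compact space and hence compact. Thus $f$ is proper onto its image.

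Next, for proper $f$, fix $y\in\mathrm{image}(f)$.
\begin{itemize}
\item The fibre $f^{-1}(y)$ is compact, and it is discrete because $f$ is locally injective. Hence it is finite, say $\{x_1,\dots,x_k\}$.
\item Choose pairwise disjoint open neighbourhoods $U_i\ni x_i$ on which $f$ restricts to a diffeomorphism onto an open set $f(U_i)$.
\item The set $F=\mathsf{Dom}(f)\setminus \bigcup_i U_i$ is closed. Properness makes $f$ a closed map, because $\B_2$ is metrizable and hence compactly generated. So $f(F)$ is closed and misses $y$.
\item Let $V$ be a path-connected open ball around $y$ contained in $\bigcap_i f(U_i)\setminus f(F)$.
\end{itemize}
Then $f^{-1}(V)=\bigsqcup_i (U_i\cap f^{-1}(V))$, and each piece maps diffeomorphically onto $V$. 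So $V$ is evenly covered, and the sheet number is locally constant. It is therefore constant on each component of the image, and finite.

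Finally, suppose $\mathrm{image}(f)$ is simply connected, and work on a path component of $\mathsf{Dom}(f)$. The restriction of $f$ there is still a covering, since a component of a covering space over a locally path-connected base is again a covering. A connected covering of a simply connected, locally path-connected space is a homeomorphism. Since $f$ is a local diffeomorphism, the inverse is differentiable, so $f$ is a global diffeomorphism. (If the domain is disconnected, this must be read componentwise, or connectedness of the domain assumed.)

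The main obstacle is the closedness of proper maps and the point-set hygiene for a possibly non-open $\mathsf{Dom}(f)$. I would handle closedness by a sequence argument. If $x_n\in F$ and $f(x_n)\to z$, then $\{f(x_n)\}\cup\{z\}$ is compact. Properness then gives a subsequence $x_n\to x\in F$, and continuity gives $f(x)=z$. The local-diffeomorphism neighbourhoods are taken relative to $\mathsf{Dom}(f)$, using the Convention on local extensions.
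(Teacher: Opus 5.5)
Your argument is correct when $\mathsf{Dom}(f)$ is open, and there it takes a more self-contained route than the paper. The paper asserts that finite fibres give evenly covered neighbourhoods and then detours through unique path lifting and a cited covering criterion. You construct the evenly covered $V$ directly, and your closedness-of-proper-maps step is exactly what finite fibres alone do not provide. Your caveat about disconnected domains in the last step is also needed.

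The genuine gap is in case (2), and more generally whenever $\mathsf{Dom}(f)$ is not open. That is the only way (2) is non-vacuous, since a nonempty open subset of a nonzero Banach space is never compact. In that situation two things go wrong:
\begin{enumerate}
\item Your claim that $\mathrm{image}(f)$ is open in $\B_2$ is false, since the image is a nonempty compact set.
\item The step ``choose $U_i\ni x_i$ on which $f$ restricts to a diffeomorphism onto an open set $f(U_i)$'' has no relative substitute. The convention on local extensions only gives that $f$ is injective on small relatively open $U_i\subset\mathsf{Dom}(f)$. It does not give that $f(U_i)$ is relatively open in $\mathrm{image}(f)$.
\end{enumerate}
Without that, $\bigcap_i f(U_i)\setminus f(F)$ need not contain any neighbourhood of $y$ in the image. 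Your closing remark that neighbourhoods are ``taken relative to $\mathsf{Dom}(f)$'' does not address this.

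No argument can close this gap, because the conclusion fails under the stated hypotheses. Restrict polar coordinates $(r,\theta)\mapsto(r\cos\theta,r\sin\theta)$ to the compact connected rectangle $[1,2]\times[0,3\pi]\subset\R^2$.
\begin{itemize}
\item It extends to a $\mathsf{C}^1$ map with invertible derivative on $(\tfrac12,3)\times\R$. So by Remark~\ref{rem:C1} it satisfies hypothesis (1) of Theorem~\ref{thrm:main-1}, and it is proper.
\item Its image is the connected annulus $1\le|z|\le2$. The fibres have two points over the closed upper half and one point over the open lower half, so it is not a covering.
\item At $y=(\tfrac32,0)$ you can see your step fail. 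A relative neighbourhood $U_1$ of $(\tfrac32,0)$ only reaches $\theta\ge0$, so $f(U_1)$ is not a neighbourhood of $y$ in the image.
\end{itemize}
There is also a one-dimensional example: $x\mapsto x$ on $[0,1]$ together with $x\mapsto x-\tfrac32$ on $[2,3]$. It satisfies strong \textsf{A} with $A=\mathrm{id}$ and has simply connected image $[0,\tfrac32]$, yet it is neither a covering nor injective.

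So an extra hypothesis is needed, e.g.\ that $f\colon\mathsf{Dom}(f)\to\mathrm{image}(f)$ is open. With it, your proof goes through verbatim, taking $V$ relatively open in $\mathrm{image}(f)$. The paper's proof of (2) does not supply the missing idea either. Its ``compact Hausdorff'' fact presupposes that $f$ is a local homeomorphism onto open subsets of the image. That rests only on the remark at the start of Section~\ref{sec:Proofs} that openness passes to restrictions, which is false for restrictions to non-open sets.
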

\begin{corollary}\label{corol:main-2}
	If $f: \B_1 \to \B_2$ satisfies either of the the hypotheses of the Main Theorem and
	\[
	\int_0^\infty \inf_{\|f(x)\|\le s} \|D_xf\| \; ds = \infty,
	\]
	holds (this is referred to as the Hadamard-Levy criterion), then $f$ is a (global) diffeomorphism. In particular, this holds when $\|D_xf\|$ is uniformly bounded away from zero.  
\end{corollary}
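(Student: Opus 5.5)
The plan is to reduce Corollary~\ref{corol:main-2} to the Hadamard--Levy global inverse mapping theorem. That classical result says a local diffeomorphism $f:\B_1\to\B_2$ satisfying
\[
\int_0^\infty \inf_{\|f(x)\|\le s} \|(D_xf)^{-1}\|^{-1}\, ds=\infty
\]
is a global diffeomorphism. We would cite it in the form surveyed in~\cite{Gutu}. The integrand there involves the quantity $\|(D_xf)^{-1}\|^{-1}$, the conorm of $D_xf$. We interpret the criterion in the statement with this quantity in place of $\|D_xf\|$, since that is the version in which the classical theorem holds and it agrees with the statement in finite-dimensional heuristics. The proof therefore has two steps: first establish that $f$ is a local diffeomorphism, then verify the global hypotheses of the Hadamard--Levy theorem.

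\textbf{Step 1 (local diffeomorphism).} By hypothesis $f$ satisfies one of items (1)--(4) of Theorem~\ref{thrm:main-1}. It is everywhere differentiable with invertible derivative and is defined on all of $\B_1$. Theorem~\ref{thrm:main-1} then gives that $f$ is a local diffeomorphism: locally injective, open, with differentiable local inverses $D(f^{-1})=(Df)^{-1}$. We also note that $f$ is continuous, since it is differentiable.

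\textbf{Step 2 (path lifting).} This is the heart of the Hadamard--Levy argument, and we would sketch it so as to check that $\mathsf{C}^1$ is not needed. Fix a continuous path $\gamma:[0,1]\to\B_2$ and a point $x_0$ with $f(x_0)=\gamma(0)$. By Step 1, local inverses allow us to lift $\gamma$ on a maximal half-open interval $[0,t^*)$, and the lift is unique because $f$ is a local homeomorphism and $\B_1$ is Hausdorff. To show $t^*=1$ and that the lift extends, it suffices to show the lift $\tilde\gamma$ stays bounded and is Cauchy as $t\to t^*$. It is cleanest to first take $\gamma$ piecewise linear, since the lift is then differentiable and
\[
\|\tilde\gamma'\|\le \|(D_{\tilde\gamma}f)^{-1}\|\,\|\gamma'\|.
\]
Since $\|f(\tilde\gamma(t))\|=\|\gamma(t)\|\le s_0:=\max\|\gamma\|$, we get the pointwise bound
\[
\|(D_{\tilde\gamma}f)^{-1}\|\le \Big(\inf_{\|f(x)\|\le s_0}\|(D_xf)^{-1}\|^{-1}\Big)^{-1}.
\]
The divergence of the integral forces this infimum to be positive for every finite $s_0$: the infimum is nonincreasing in $s$, so if it vanished at some $s_0$ the integrand would be zero beyond $s_0$ and the integral would be finite. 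Hence $\tilde\gamma$ has finite length, is Cauchy, and converges; the lift then extends past $t^*$ by Step 1, so $t^*=1$. A standard approximation handles arbitrary continuous paths. The divergence of the integral (rather than mere positivity) is what is needed in the classical proof for the radial version. Here, working on compact paths, positivity on bounded sublevel sets already suffices.

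\textbf{Step 3 (covering and conclusion).} Path lifting with uniqueness, together with the local homeomorphism property, makes $f$ a covering map onto its image. The image is open, and it is closed by path lifting along segments, so $f$ is onto the connected space $\B_2$. Since $\B_2$ is simply connected, $f$ is a homeomorphism, and by Step 1 it is a diffeomorphism. For the final sentence of the corollary, a uniform lower bound $\|(D_xf)^{-1}\|^{-1}\ge c>0$ makes the integrand at least $c$, so the integral diverges.

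The main obstacle is Step 2. There one must make sure the lift is differentiable, which uses only pointwise differentiability of the local inverse and no continuity of $Df$, and one must handle non-rectifiable paths, via homotopy-invariance of lifting or approximation by polygonal paths.
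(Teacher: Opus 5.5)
Your proof has one step that fails as written. In Step~2 you claim that divergence of $\int_0^\infty m(s)\,ds$, with $m(s)=\inf_{\|f(x)\|\le s}\|(D_xf)^{-1}\|^{-1}$, forces $m(s_0)>0$ for every $s_0$, because otherwise ``the integral would be finite''. But for $s<\inf_{\B_1}\|f\|$ the set $\{\|f(x)\|\le s\}$ is empty. There $m(s)=+\infty$, so the integral can diverge for a trivial reason.

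Concretely, take $F(x,y)=(1+e^x)(\cos y,\sin y)$ on $\R^2$. It is $\mathsf{C}^\infty$ with invertible derivative, so it satisfies item~(1) by Remark~\ref{rem:C1}. Its derivative $D_{(x,y)}F$ has singular values $e^x$ and $1+e^x$. Hence $m(s)=+\infty$ for $s\le 1$ and $m(s)=0$ for $s>1$, and the integral is infinite. Yet $F$ is neither injective nor onto: the origin has no preimage, so the segment from $(2,0)$ to $0$ cannot be lifted. So the criterion in your reformulation is false as it stands, and your argument breaks exactly at this deduction.

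The repair is easy but must be stated. You can assume $0\in f(\B_1)$; then $m\le m(0)<\infty$ and your monotonicity argument works. Alternatively, start the integral at some $\|f(x_0)\|$. Or simply take as hypothesis what your lifting argument actually uses: $\|(D_xf)^{-1}\|$ is bounded on $f^{-1}$ of every bounded set. Your treatment of the last sentence of the corollary is unaffected, since a uniform lower bound on the conorm gives positivity directly.

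Otherwise your route is the paper's: the Main Theorem gives a local diffeomorphism, and then a Hadamard--L\'evy global theorem is applied, which the paper takes from Plastock. You improve on it in two places.

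First, replacing $\|D_xf\|$ by the conorm is necessary, not cosmetic. The same $F$ has $\|D_{(x,y)}F\|=1+e^x\ge 1$, so it contradicts the corollary as literally written, including its ``in particular''.

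Second, Plastock's criterion is $\int_0^\infty\inf_{\|x\|\le s}\|(D_xf)^{-1}\|^{-1}\,ds=\infty$, in the form usually quoted for $\mathsf{C}^1$ local diffeomorphisms. There the sets are balls in the domain, which are never empty. In that version it is divergence, not mere positivity, that prevents the lift from escaping to infinity, via a comparison argument on $t\mapsto\|\tilde\gamma(t)\|$. The Main Theorem only yields local inverses that are pointwise differentiable. Your observation handles this: the lift $\tilde\gamma=g\circ\gamma$ of a segment is everywhere differentiable, so the mean value inequality applies. That lets you run either version without $\mathsf{C}^1$, which the paper's one-line citation glosses over.

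Your choice of $f$-sublevel sets buys a shorter argument, since positivity on compact paths suffices, at the price of the empty-set degeneracy above. Finally, instead of a ``standard approximation'' for arbitrary continuous paths, you can quote Plastock's theorem: a local homeomorphism between Banach spaces that lifts all line segments is a homeomorphism onto.
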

\begin{corollary}\label{corol:main-3}
Suppose $f: \mathsf{Dom}(f) \subset \B_1 \to \B_2$ is a locally injective continuous map. Then $f$ is a local homeomorphism provided either of the following happens
\begin{enumerate}
	\item $f$ satisfies strong ${\sf A}$ on a dense set $\mathcal{R}$ locally uniformly on all scales; 
	\item If $\B_1$ is locally weakly compact and $f$ is weak-weak continuous and satisfies ${\sf A}$ on a dense set $\mathcal{R}$ locally uniformly on all scales; 
	\item $\B_1$ satisfies ${\sf FPP}$ and $f$ satisfies ${\sf A}$ on a dense set $\mathcal{R}$, locally uniformly on all scales.
	\item  $\B_1$ is strictly convex with ${\sf FPP}$ and $f$ satisfies weak ${\sf A}$ on a dense set $\mathcal{R}$, locally uniformly on all scales.
\end{enumerate}
\end{corollary}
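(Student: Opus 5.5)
Since $f$ is assumed continuous and locally injective, the plan is to show that $f$ is an open map near every point. A continuous, injective, open map on a neighbourhood is a homeomorphism onto its image. So everything reduces to the following claim: for every $x_0\in\mathsf{Dom}(f)$ and every small ball $B_\rho(x_0)$ on which $f$ is injective, the set $f(B_\rho(x_0))$ contains a ball around $f(x_0)$. The tool is the fixed-point mechanism behind Definition~\ref{defn:prop-a}. A fixed point $x$ of $\tilde f_{a,y,A_{a,C_a}}$ satisfies $A_{a,C_a}(f(x)-y)=0$, and injectivity of $A_{a,C_a}$ then gives $f(x)=y$. So it suffices to produce, for each $y$ near $f(x_0)$, a fixed point of some $\tilde f_{a,y}$ in a convex set $C_a\subset B_\rho(x_0)$.

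\textbf{Step 1: using density and local uniformity.} Fix $x_0$ and let $\alpha,\beta,\eta,\gamma$ and $r_*$ be the locally uniform parameters valid on a neighbourhood $V$ of $x_0$. Choose $r\le r_*$ with $\gamma r<\rho/2$. By density, pick $a\in\mathcal{R}\cap V$ with $\|a-x_0\|<\delta$, where $\delta$ is to be fixed. The definition provides a convex set $C_a$ with $\eta r\le\diam C_a\le\gamma r$ and $\mathrm{dist}(a,\partial C_a)\ge\beta\eta r$. Hence $C_a\subset B_{\gamma r}(a)\subset B_\rho(x_0)$, and the definition also gives $s_{a,C_a}\ge\alpha\eta r$. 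Now use continuity of $f$ at $x_0$ to choose $\delta$ so small that $\|f(a)-f(x_0)\|<\alpha\eta r/2$. Then every $y\in B_{\alpha\eta r/2}(f(x_0))$ lies in $B_{s_{a,C_a}}(f(a))$. The uniformity is essential here: it makes the radius of the target ball independent of which approximating point $a$ we pick. Without it, the balls $B_{s_{a}}(f(a))$ might shrink and fail to contain $f(x_0)$.

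\textbf{Step 2: the fixed point in each case.} Case (1): $\tilde f_{a,y}$ is a contractive self-map of the closed set $C_a$, so Banach's theorem gives a fixed point. Case (3): $\tilde f_{a,y}$ is a non-expansive self-map of a bounded closed convex set, so ${\sf FPP}$ gives a fixed point. Case (2): I would approximate $\tilde f_{a,y}$ by the contractions $(1-t)\tilde f_{a,y}+t a$, which are self-maps of $C_a$ by convexity. Each has a fixed point $x_t$ with $\|x_t-\tilde f_{a,y}(x_t)\|\to0$ as $t\to0$. Since $C_a$ is weakly compact (closed, convex, bounded, in a locally weakly compact space, after shrinking $r$), some subnet $x_t\rightharpoonup x$. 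Weak-weak continuity of $f$, together with continuity of $A$ composed with the resulting limit, should then identify $x$ as a fixed point. I expect this limit passage to be the main obstacle: $A$ is only norm-continuous, so I would need either linearity of $A$ or a closer look at $f(x_t)-y=A^{-1}(x_t-\tilde f(x_t))\to$ its weak limit. Case (4): by the definition of weak ${\sf A}$, $\tilde f_{a,y}$ either has a fixed point, which is what we want, or it is quasi-nonexpansive. In a strictly convex space the fixed-point set of a quasi-nonexpansive map is closed and convex. I would reduce to ${\sf FPP}$ via the standard results recalled in \textsection\thinspace\ref{subsec:qne}. Then I must rule out the no-fixed-point alternative. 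I would do this by invoking the lemma for nearby points of $\mathcal{R}$, or else accept it as the definitional alternative and treat it as a potential gap.

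\textbf{Step 3: conclusion.} Every $y\in B_{\alpha\eta r/2}(f(x_0))$ has a preimage in $B_\rho(x_0)$. Since $\rho$ was arbitrary, $f$ is open at every point. Combined with local injectivity and continuity, this gives continuity of the local inverse, so $f$ is a local homeomorphism.
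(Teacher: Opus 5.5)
Your Steps 1 and 3, and cases (1) and (3) of Step 2, follow the paper's argument exactly. Density plus local uniformity give $C_a\subset B_\rho(x_0)$ with $s_{a,C_a}\ge\alpha\eta r$ independent of $a$. Continuity of $f$ at $x_0$ recenters the target ball. The fixed point comes from Banach's theorem or ${\sf FPP}$, as in the proof of Lemma~\ref{lemma:key-1}, and openness together with continuity and local injectivity gives the local homeomorphism. Cases (2) and (4), which you flag yourself, are genuine gaps.

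\textbf{Case (2).} The paper does not approximate by contractions. It applies the Schauder--Tychonoff theorem (\textsection\thinspace\ref{subsec:wt}) directly to $\tilde f_{a,y}$ on the weakly compact convex set $C_a$, which does not even use non-expansiveness. That route needs $\tilde f_{a,y}=\mathrm{id}-A_{a,C_a}\circ(f-y)$ to be weakly continuous. Weak-weak continuity of $f$ gives this when $A_{a,C_a}$ is itself weak-weak continuous (e.g.\ bounded linear, the ${\sf LinA}$ case), but not for the general norm-continuous injective $A_{a,C_a}$ allowed by Definition~\ref{defn:prop-a}. Your scheme needs exactly the same extra ingredient, and with it your argument closes: $A(f(x_t)-y)\rightharpoonup A(f(x)-y)$, while $A(f(x_t)-y)=x_t-\tilde f_{a,y}(x_t)\to 0$ in norm. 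Hence $A(f(x)-y)=0$ and $f(x)=y$. Without such an assumption on $A_{a,C_a}$, neither your argument nor the paper's goes through as it stands.

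\textbf{Case (4).} Here the problem is more basic. Weak ${\sf A}$ explicitly allows $\tilde f_{a,y}$ to have no fixed point in $C_a$; the paper's Example~\ref{ex:dis-cont} verifies weak ${\sf A}$ precisely through this alternative. In that case $y\notin f(C_a)$ and your argument produces no preimage of $y$. None of your proposed remedies fixes this:
\begin{itemize}
\item The results of \textsection\thinspace\ref{subsubsec:fp-qn} describe the fixed-point set of a quasi-nonexpansive map, whose non-emptiness is part of the definition, so they never create a fixed point.
\item ${\sf FPP}$ applies only to continuous non-expansive self-maps, which weak ${\sf A}$ does not provide.
\item Moving to other points of $\mathcal{R}$ does not help, since the fixed-point-free alternative may occur at every $a\in\mathcal{R}$.
\end{itemize}
The paper's proof of Lemma~\ref{lemma:key-1} asserts that ${\sf FPP}$ supplies the fixed point under hypothesis (4). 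That is the same unjustified step, so there is no argument in the paper to borrow. To obtain (4) you need an additional hypothesis forcing existence, for instance that the quasi-nonexpansive alternative holds for every $y\in B_{s_{a,C_a}}(f(a))$. With that hypothesis your Steps 1 and 3 apply verbatim.
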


\par In what follows we wish to provide criteria to ensure certain maps that are not everywhere differentiable, are still almost diffeomorphism in a certain suitable sense. Ideally one wishes to obtain an inverse mapping theorem that works for Lipschitz maps. But our analysis does not yet work in such generality. However this can be done for certain maps that are in a sense more regular than Lipschitz maps but less than everywhere differentiable with an invertible derivative. 

First we need to introduce certain notion of large sets in terms of a convexity like property that works well with our tools. 
\begin{definition}
	A subset $S$ of $\B_1$ is said to be segmentally dense if for any line segment $L$ with endpoint in $S$, we have $S \cap L$ is dense in $L$. 
\end{definition}
\noindent\underline{\small \emph{\textbf{Notation:}}} We denote the set of points at which $f$ is differentiable by $\mathsf{FD}$ and the points where derivative is invertible (non-degenerate) by $\mathsf{FD}^{\pm}$.

\par The following notions are large scale analogues of a map having a non-degenerate derivative. 
\begin{definition}
	\hfill
	\begin{itemize}
		\item The map $f$ is said to have \emph{weakly non-degenerate directional derivatives}, if the directional derivatives of $f$ along any line and in the direction of the line, are nonzero on a dense subset of that line;
		\item The map $f$ is said to be segmentally non-degenerate when $\mathsf{FD}^{\pm}$ is a segmentally dense set. 
	\end{itemize}
\end{definition}
The geometric intuition is that one should think of maps with properties \emph{segmentally non-degenerate} and/or \emph{having weakly non-degenerate directional derivatives} as maps that do not collapse non-trivial line segments to zero length. 

\begin{definition}
	The map $f: \mathsf{Dom}(f)\subset \B_1 \to \B_2$ is said to be an almost diffeomorphism when it is a homeomorphism onto its image and $f$ and $f^{-1}$ are differentiable on dense sets. 
\end{definition}

\begin{corollary}\label{corol:main-4}
	Suppose $f: \mathsf{Dom}(f) \subset \B_1 \to \B_2$ and $\B_1$ satisfy any of the items (1)--(4) as in Corollary~\ref{corol:main-3}.
\begin{enumerate}
	\item If $f$ has almost non-degenerate directional derivatives 
	then, it is a local homeomorphism.
	\item If $f$ is segmentally non-degenerate, then it is a local almost diffeomorphism. 
\end{enumerate}
\end{corollary}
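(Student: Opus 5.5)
The plan is to reduce both items to Corollary~\ref{corol:main-3}, whose only missing hypothesis is local injectivity. Item (2) will then follow by upgrading the local homeomorphism to a local almost diffeomorphism on the large set $\mathsf{FD}^{\pm}$. Continuity of $f$ comes for free under items (1) and (3) of Corollary~\ref{corol:main-3}, since strong ${\sf A}$ and ${\sf A}$ force continuity (see the discussion after Lemma~\ref{lem:pair}). Under item (2) continuity is part of the weak-weak hypothesis together with local weak compactness, and we will carry it along as an assumption where needed. So the first, and main, step is: \emph{if $f$ does not collapse segments, then $f$ is locally injective near points of $\mathcal{R}$, and by density of $\mathcal{R}$ and the locally uniform scales, everywhere.}

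For local injectivity, fix $a\in\mathcal{R}$ and a convex set $C_a$ from Definition~\ref{defn:prop-a} together with $y\in B_{s_{a,C_a}}(f(a))$. Suppose $x_1\neq x_2$ in $C_a$ with $f(x_1)=f(x_2)=y$. Then both points are fixed points of $\tilde f_{a,y}$. Since $\tilde f_{a,y}$ is non-expansive (or quasi-nonexpansive in the weak case), the argument then splits by the geometry of $\B_1$.
\begin{itemize}
\item In the strong case the fixed point is unique, so this situation is impossible.
\item In the non-expansive case with $\B_1$ strictly convex, the fixed point set is convex and hence contains the whole segment $L=[x_1,x_2]$.
\item In the remaining cases one uses weak compactness or the ${\sf FPP}$ applied to the restriction of $\tilde f_{a,y}$ to $C_a$, to produce a non-degenerate segment of fixed points in the same way.
\end{itemize}
On such a segment $f\equiv y$, because $A_{a,C_a}$ is injective: $x-A(f(x)-y)=x$ gives $A(f(x)-y)=0=A(0)$. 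Hence every directional derivative of $f$ along $L$ in the direction of $L$ vanishes. This contradicts item (1)'s hypothesis of weakly non-degenerate directional derivatives, which demands nonvanishing on a dense subset of $L$. It likewise contradicts item (2), since at any point of $\mathsf{FD}^{\pm}\cap L$ (dense in $L$ by segmental density) the invertible derivative cannot kill the direction of $L$.

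The obstacle is producing a non-degenerate segment of preimages without strict convexity. There I would first try showing that $\mathrm{Fix}(\tilde f_{a,y})$ is closed and convex using the linear case ${\sf LinA}$. Otherwise, I would use the locally uniform scales to shrink $C_a$ around $x_1$ until it excludes $x_2$, and derive a contradiction with uniqueness obtained from the proof of Corollary~\ref{corol:main-3}. Once local injectivity holds on neighborhoods $C_a$, $a\in\mathcal{R}$, whose interiors (by the uniform parameters $\beta_a,\eta_a$) cover a neighborhood of any point, Corollary~\ref{corol:main-3} gives that $f$ is a local homeomorphism. This proves (1), and also the topological part of (2).

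For the rest of (2), on a neighborhood $V$ where $f|_V$ is a homeomorphism, $f$ is differentiable on $\mathsf{FD}^{\pm}\cap V$. This set is dense, because a segmentally dense set is dense: take a segment from a point of $\mathsf{FD}^{\pm}$ into any ball. At $x\in\mathsf{FD}^{\pm}\cap V$ the inverse $g=(f|_V)^{-1}$ is continuous at $f(x)$ and $D_xf$ is invertible. The standard argument, writing $f(x+h)-f(x)=D_xf\,h+o(\|h\|)$ and using continuity of $g$ to make $h\to0$, shows $g$ is differentiable at $f(x)$ with derivative $(D_xf)^{-1}$. The points $f(\mathsf{FD}^{\pm}\cap V)$ are dense in $f(V)$ because $f|_V$ is a homeomorphism, so $f|_V$ is an almost diffeomorphism.
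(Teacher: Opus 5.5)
Your route is the paper's: openness from the argument for Corollary~\ref{corol:main-3}, and local injectivity by producing a non-degenerate segment in $f^{-1}(y)\cap C_a=\mathrm{Fix}(\tilde f_{a,y})$ on which $f\equiv y$. Your last paragraph, on differentiability of the local inverse on the dense set $f(\mathsf{FD}^{\pm}\cap V)$, is correct and supplies what the paper leaves implicit. The injectivity argument works in the strong case, by uniqueness of the contraction's fixed point. For item (1) it also works when $\B_1$ is strictly convex; for weak ${\sf A}$ you should cite Dotson's convexity of fixed point sets of quasi-nonexpansive maps.

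The genuine gap is the one you flag yourself. Without strict convexity (items (2), (3)) you have no argument that two fixed points of $\tilde f_{a,y}$ span a segment of fixed points. Your third bullet is not a proof: weak compactness and ${\sf FPP}$ give existence of fixed points and, at best (Bruck), that $\mathrm{Fix}$ is a nonexpansive retract. Neither of your repairs works.
\begin{itemize}
\item ${\sf LinA}$ is not a hypothesis. Even for linear $A$, the map $x\mapsto x-A(f(x)-y)$ is not affine, so nothing forces its fixed point set to be convex.
\item The shrinking idea is circular. Corollary~\ref{corol:main-3} \emph{assumes} local injectivity; its proof gives openness only, not uniqueness. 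Moreover, failure of local injectivity yields colliding pairs in every neighbourhood, so excluding one particular $x_2$ achieves nothing.
\end{itemize}
The paper fills this step with Proposition~\ref{prop:key-2}, but its convexity claim is false as stated. In $(\R^2,\|\cdot\|_\infty)$, which is finite-dimensional and so meets all its hypotheses, $T(x_1,x_2)=(x_1,\tfrac12\sin 2x_1)$ is a nonexpansive self-map of $[-1,1]^2$. Its fixed point set is a graph containing no segment at all. So in the non-strictly convex cases, neither your proposal nor the paper's proof actually establishes local injectivity.

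Your contradiction for item (2) of the corollary also fails as written. If $f$ is constant on a segment $L$, then at every point of $L$ where $f$ is differentiable the derivative annihilates the direction of $L$, so $L\cap\mathsf{FD}^{\pm}=\emptyset$. Segmental density only constrains segments with an endpoint in $\mathsf{FD}^{\pm}$; for instance, the complement of a hyperplane is segmentally dense yet misses every segment inside the hyperplane. So ``$\mathsf{FD}^{\pm}\cap L$ dense in $L$'' is false, not merely unproved. The paper's one-line contradiction has the same defect.

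When fibers are convex, you can repair this with openness:
\begin{itemize}
\item Suppose $f(x_1)=f(x_2)=y$ with $x_1\ne x_2$ in $\mathrm{int}\,C_a$.
\item For every $y'$ in some ball $B_\delta(y)$, the convex set $f^{-1}(y')\cap C_a$ meets disjoint small balls around $x_1$ and $x_2$, so it is not a singleton.
\item The nonempty open set $f^{-1}(B_\delta(y))\cap\mathrm{int}\,C_a$ contains a point $p$ of the dense set $\mathsf{FD}^{\pm}$.
\item By the argument of Lemma~\ref{lemma:2.5}, $p$ is isolated in its fiber, which is impossible in a convex set with more than one point.
\end{itemize}

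Finally, under item (4), weak ${\sf A}$ does not imply continuity (Example~\ref{ex:dis-cont}). Invoking Corollary~\ref{corol:main-3} there needs a continuity argument you have not given.
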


Another important consequence is the generalized implicit function theorem. 

\begin{theorem}[Generalized implicit function theorem]\label{thrm:main-2}
	Suppose $\B_1$ and $\B_2$ are two Banach spaces and $g:W \subset \B_1 \times \B_2 \to \B_2$ is a function which we will write it as $g(t,x)$. Suppose $g$ is everywhere differentiable and with invertible derivative $D_x g(t,x)$ with respect to the variable $x$. Assume $g$ and the domain $\B_1\times \B_2$ satisfy either of the hypotheses of Theorem~\ref{thrm:main-1}. Then, every non-empty level set $g^{-1}(c)$ can be locally uniquely written, around $(a,b) \in g^{-1}(c)$ as a graph of a differentiable function $h: U_a \subset \B_1 \to \B_2$. 
\end{theorem}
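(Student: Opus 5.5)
The plan is the classical reduction of the implicit function theorem to the inverse mapping theorem, with Theorem~\ref{thrm:main-1} in place of the $\mathsf{C}^1$ inverse mapping theorem. Define
\[
F: W \subset \B_1\times\B_2 \to \B_1\times \B_2,\qquad F(t,x) = (t, g(t,x)).
\]
The level set $g^{-1}(c)$ near $(a,b)$ is then exactly $F^{-1}(\{(t,c)\})$, and $F(a,b)=(a,c)$.

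The first step is to check that $F$ satisfies the hypotheses of Theorem~\ref{thrm:main-1}. Differentiability of $F$ everywhere is immediate from that of $g$. Its derivative has the block triangular form
\[
D_{(t,x)}F = \begin{pmatrix} \mathrm{Id} & 0 \\ D_tg & D_xg\end{pmatrix}.
\]
Since $D_xg(t,x)$ is invertible, this derivative has the explicit bounded inverse $(u,v)\mapsto \bigl(u, (D_xg)^{-1}(v - D_tg\,u)\bigr)$.

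For the property ${\sf A}$ part, I read the phrase ``$g$ and the domain satisfy either of the hypotheses of Theorem~\ref{thrm:main-1}'' as saying that the relevant version of ${\sf A}$ (strong, plain, or weak) holds for the map $(t,x)\mapsto(t,g(t,x))$ on all scales. The first coordinate of $F$ is the identity, which is $\mathsf{C}^1$, so in spirit this is the pairing property, Lemma~\ref{lem:pair}. If one instead assumes ${\sf A}$ for $g$ itself, with injective maps $A$, I would build the operator for $F$ as $(u,v)\mapsto (u, A(v))$, take the convex sets to be $B_\tau(a)\times C$, and verify the contraction, non-expansiveness or quasi-nonexpansiveness estimates coordinatewise, as in the proof of Lemma~\ref{lem:pair}. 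The structural hypotheses (local weak compactness, ${\sf FPP}$, strict convexity, weak-weak continuity) are assumed on $\B_1\times\B_2$, so the chosen item of Theorem~\ref{thrm:main-1} applies directly.

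The second step applies Theorem~\ref{thrm:main-1}. This gives neighborhoods $V\ni(a,b)$ and $V'\ni(a,c)$ such that $F:V\to V'$ is a diffeomorphism. Write its inverse as $F^{-1}(s,z) = (s, \phi(s,z))$; the first coordinate is forced to be $s$ because $F$ preserves $t$. Choose $U_a$ with $U_a\times\{c\}\subset V'$ and set
\[
h(t)=\phi(t,c).
\]
Then $(t,x)\in V$ with $g(t,x)=c$ if and only if $F(t,x)=(t,c)$, which holds if and only if $x=h(t)$. This gives existence of the graph representation and local uniqueness at once. Differentiability of $h$ follows because $F^{-1}$ is differentiable.

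The main obstacle is the transfer of property ${\sf A}$ to $F$. Lemma~\ref{lem:pair} treats genuinely decoupled products $(f^1(t), f^2(x))$, whereas here the second component depends on $t$. In the coupled setting the map $\tilde F$ has components $t-(t-y_1)=y_1$ and $x-A(g(t,x)-y_2)$. The first component is constant in $t$, so $\tilde F$ is a self-map of $B_\tau(a)\times C$ once $y_1$ is close to $a$. Its Lipschitz or non-expansive behavior, however, now depends on the $t$-variation of $x\mapsto x-A(g(t,x)-y_2)$. I would handle this by requiring the ${\sf A}$ property of $g(t,\cdot)$ locally uniformly in $t$, with the same operator $A$ and a $t$-continuous fixed point set. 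If that is unavailable, I would simply adopt the reading of the hypothesis that places ${\sf A}$ directly on $F$.
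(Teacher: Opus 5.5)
Your argument is the paper's own proof: the same auxiliary map $\mathcal{F}(t,x)=(t,g(t,x))$ with its invertible block-triangular derivative, an appeal to Theorem~\ref{thrm:main-1}, and $h(t)$ read off as the second component of $\mathcal{F}^{-1}(t,c)$, and under your reading (property ${\sf A}$ imposed on $\mathcal{F}$ itself) it is complete. The paper goes beyond you only at the step you flag, and there it simply cites Lemma~\ref{lem:pair}; your objection is correct, since that lemma covers only decoupled products whose $\tilde{f}$-map splits coordinatewise, so the paper's argument also rests on your reading, or else on a slice-wise version of ${\sf A}$ for the maps $g(t,\cdot)$ with convex sets $C'$ and radii $s$ locally uniform in $t$, which does suffice (neither a common operator nor $t$-continuity of the fixed-point sets is needed) because $\mathcal{F}^{-1}(y_1,y_2)\cap\bigl(\bar B_\tau(a)\times C'\bigr)=\{y_1\}\times\bigl(g(y_1,\cdot)^{-1}(y_2)\cap C'\bigr)$ for $y_1$ near $a$, so the openness and convex-discrete-fibre steps of the proof of Theorem~\ref{thrm:main-1} can be rerun for $\mathcal{F}$ directly.
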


\subsection{Road map and Organization}
In \textsection\thinspace\ref{sec:prelim}, we provide some necessary background material needed for better readability of these notes. 

In \textsection\thinspace\ref{sec:Proofs}, we provide the detailed proofs of the main and auxiliary results. In particular, the road map of the generalized IMT is as follows:

\begin{enumerate}
	\item Variants of property $\textsf{A}$ along with suitable hypotheses on the domain Banach space in each case, is utilized to prove the existence of fixed points of the maps $\tilde{f}$; this in turn results in local surjectivity;
	\item Differentiability along with an invertible derivative is argued to provide discreteness of the map $f$ i.e. the fact it has discrete fibers;
	\item Invoking the convexity hypotheses, we show local injectivity, thus establishing $f$ is a local homeomorphism;
	\item Revisiting differentiability with an invertible derivative, we show local diffeomorphism. 
\end{enumerate}  

\par In order to demonstrate the significance of these results, we have included an application to abstract ODE theory, of the generalized theorems we have established in \textsection\thinspace\ref{sec:application} where we show existence and uniqueness for a certain type of ODE system without continuity or Lipschitz condition on the RHS of the equation. 

\addtocontents{toc}{\protect\setcounter{tocdepth}{-1}}
\section*{\small \bf  Acknowledgments}
\addtocontents{toc}{\protect\setcounter{tocdepth}{1}}

\vspace{-175pt}

\begin{minipage}[c][7cm][b]{0.87\textwidth}
\renewcommand{\labelitemi}{\raisebox{1pt}{\scalebox{0.6}{\ding{169}}}}
	\begin{itemize}
		\item \textit{The author would like to express his gratitude to the anonymous referees whose valuable comments helped the author improve this article}. 
	\end{itemize}
\end{minipage}
\normalsize

\section{Preliminaries}\label{sec:prelim}
\subsection{Weak topology and local weak compactness}\label{subsec:wt}
The weak topology on a Banach space is one that topologizes weak convergence of sequences. In this paper, a weakly continuous map $f: \mathsf{Dom}(f)\subset \B_1 \to \B_2$ between Banach spaces, refers to a map that is weak-weak continuous. 
\par A well-known consequence of Banach-Alaoglu's theorem (known as Kakukati's theorem) is that reflexive Banach spaces are locally weakly compact; see~\cite[Theorem V.4.2]{Conw}
\par By the Schauder-Tychonoff fixed point theorem, when $\B$ is locally weakly compact, any weakly continuous self map $f: C \to C$ admits a fixed point when $C$ is a bounded closed convex set. 
\subsection{Convexity of a Banach space}
\hfill\\
\noindent\underline{\small \emph{\textbf{Terminology:}}} In some older literature it is customary to refer to convexity (mainly in the context of uniform ones) as rotundity.
\par Characteristically, in any normed linear space, any ball is convex. Many important fixed point results in nonlinear functional analysis often requires stronger notions of convexity. 
\par Recall the modulus of convexity of a Banach space $\B$ is the non-decreasing function $\mathsf{m_c}: [0,2] \to [0,1]$ given by~\cite{Opial}
\[
\mathsf{m_c}(\epsilon) := \inf_{\substack{\|x\|= \|y\|=1\\
\|x-y\|\ge \epsilon}} \left( 1 - \frac{\|x+y\|}{2}  \right).
\]
It is known that $\mathsf{m_c}$ is convex; see~\cite{Goeb}.
\par The characteristic of convexity is defined as~\cite{Goeb}
\[
\epsilon_0 := \sup \left\{\epsilon \; \;\text{\textbrokenbar}\;\; \mathsf{m_c}(\epsilon) = 0 \right\}.
\]

\subsubsection{Strictly convex space}
$\B$ is said to be strictly convex if the balls are strictly convex equivalently if $\mathsf{m_c}(2) = 1$; see \cite{Clarc}.

\subsubsection{Uniformly convex space}
Uniform convexity is stronger than strict convexity.
\begin{definition}[uniform convexity]
$\B$ is said to be  uniformly convex if 
\[
\forall \epsilon>0, \; \exists \delta >0 , \;\; s.t. \;\; \|x\|=\|y\ = 1 \;\text{and} \;|\|x-y\|\ge \epsilon \Longrightarrow \|\frac{x+y}{2}\| \le  1 - \delta 
\]
equivalently $\B$ is uniformly convex if and only if $\epsilon_0 = 0$. Uniformly convex spaces are reflexive (Milman-Pettis theorem).
 
\end{definition}

\subsubsection{$k$-uniformly convex spaces}
\par The $k$-uniformly convex (originally called $k$-uniformly rotund or $k$-UR) spaces were introduced in~\cite{Sul} as generalizations of uniformly convex spaces. These spaces are reflexive (as uniformly convex Banach spaces are).
\begin{definition}[k-uniformly rotund (convex) \cite{Sul}]
	$\B$ is called $k$-UR if 
	\[
	\forall \epsilon >0, \;\; \exists \delta>0 \;\; s.t. \;\; \|x_1\| = \cdots = \|x_{k+1}\|=1 \; \text{and} \;  V(x_1,x_2, \cdots,x_{k+1}) \Longrightarrow \left\|\frac{\sum_{i=1}^{k+1} x_i}{k+1}\right\| \le 1 - \delta
	\]
where $V(x_1, \cdots, x_{k+1})$ is the $k$-dim volume given by
\[
V(x_1, \cdots, x_{k+1}) := \sup_{\substack{f_i \in \B^* \\ \|f_i\| \le 1}} \det \begin{pmatrix} 1 & 1 & \cdots & 1 \\ f_1(x_1) & f_1(x_2) & \cdots & f_1(x_{k+1}) \\ \vdots & \ddots & \ddots & \vdots\\ f_k(x_1) &  f_k(x_2)& \cdots& f_k(x_{k+1}) \end{pmatrix}.
\]
\end{definition}

\subsection{Fixed points of (quasi)-nonexpansive maps}\label{subsec:qne}
\subsubsection{Fixed points of non-expansive maps}
\par A nonexpansive map is simply one with Lipschitz constant 1 i.e. it does not increase distances. 
\par According to the Browder-G\"ohde-Kirk fixed point theorem~\cite{Edel}, a non-expanding self map on a bounded closed convex set $C$ in a uniformly convex $\B$, admits a fixed point. It is also well-known that if $\B$ is merely strictly convex, then $f$ has a convex set of fixed points (possibly empty).   
\par A simple argument combined with the convexity results obtained in~\cite{Bruck} for nonexpansive maps shows that, when $C$ is a bounded closed convex subset of $\B$ and $\B$ satisfies ${\sf FPP}$, then the set of fixed points of a non-expansive self map $f$ on $C$ is a closed convex subset of $C$, see Proposition~\ref{prop:key-2}.
\par  It was proven in~\cite{MY} that $k$-uniformly rotund spaces satisfy the ${\sf FPP}$.
\subsubsection{Fixed points of quasi-nonexpansive maps}\label{subsubsec:fp-qn}
A (not necessarily continuous) $f:C \to C$ ($C$ closed and convex) is called quasi-nonexpansive if it has at least one fixed point and distances to the fixed point are not increased under it i.e. if for all such fixed points $p \in C$, it satisfies 
\[
\|f(x) - f(p)\|\le \|x-p\|, \quad \forall x\in C.
\]
This was introduced in~\cite{DM}. 
\par In~\cite{Dotson}, it has been shown that if $\B$ is strictly convex, then the set of fixed points of a quasi-nonexpansive map is a nonempty closed convex set on which $f$ is continuous. 
\section{Proofs}\label{sec:Proofs}
\par Note that openness, local surjectivity and local injectivity are inherited by maps restricted to smaller domains and targets (equipped with induced topologies). Since all the arguments are local and with the conventions previously made, in the proof of the fact that $f$ is a local homeomorphism, we can without loss of generality, assume that the domain of $f$ is open. So we will proceed with this assumption. 
\par Let us note the simple yet key fact that, due to the injectivity of the maps $A_{a,C_a}$, one deduces
\[
f^{-1}(y) \cap C_a = \text{the set of fixed points of $\tilde{f}_{a,y,A_{a,C_a}}$}.
\]
\emph{When all the variables can be inferred form the context, with a slight abuse of notation, we will simply write $\tilde{f}_{a,y}$ and suppress the other variables. Similarly we will write $s_a$ and $A_a$}.
\subsection{Key Lemmas}
\begin{lemma}\label{lemma:key-1}
	If $f: U \subset \B_1 \to \B_2$ satisfies either of the hypotheses of the Main Theorem, then it is open and locally surjective.
\end{lemma}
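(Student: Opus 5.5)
The plan is to use the key observation just made: for $y$ close to $f(a)$, the set $f^{-1}(y)\cap C_a$ equals the fixed-point set of $\tilde f_{a,y}$. So local surjectivity at $a$ reduces to showing that, for every $y\in B_{s_{a,C_a}}(f(a))$, the self-map $\tilde f_{a,y}$ of $C_a$ has a fixed point. This gives $B_{s_{a,C_a}}(f(a))\subset f(C_a)$. Openness then comes from the ``on all scales'' clause. Given an open $V\ni a$, pick $C_a$ whose interior lies in $V$; this is possible because the interiors of the admissible $C_a$ form a basis. Then $f(V)\supset f(C_a)\supset B_{s_{a,C_a}}(f(a))$, so $f(V)$ contains a ball around $f(a)$. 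Since $a$ is arbitrary, $f(V)$ is open.

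The fixed-point existence is handled case by case, following the items of the Main Theorem.
\begin{enumerate}
\item Under strong ${\sf A}$, $\tilde f_{a,y}$ is a contraction of the closed set $C_a$, which is complete, so the Banach fixed point theorem applies.
\item Here $\B_1$ is locally weakly compact and $f$ is weak-weak continuous. Since $A_{a,C_a}$ is linear and bounded (in the ${\sf LinA}$ setting), or at least continuous, the map $\tilde f_{a,y}$ should be weakly continuous. A bounded closed convex set is weakly closed, so $C_a$ is weakly compact, and Schauder--Tychonoff (\textsection\ref{subsec:wt}) yields a fixed point.
\item Under ${\sf A}$, $\tilde f_{a,y}$ is a continuous non-expansive self-map of the bounded closed convex set $C_a$, so ${\sf FPP}$ gives a fixed point directly. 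Local weak compactness is not even needed for existence.
\item Under weak ${\sf A}$, $\tilde f_{a,y}$ either has no fixed point or is quasi-nonexpansive. We must exclude the first alternative, and this is done in the next paragraph.
\end{enumerate}

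Item (4) is the main obstacle. Weak ${\sf A}$ by itself allows fixed-point-free maps (as in Example~\ref{ex:dis-cont}), so the everywhere differentiability with invertible derivative must be used. My plan is a continuity/connectedness argument in $y$. At $y=f(a)$, the point $a$ is a fixed point, so $\tilde f_{a,f(a)}$ is quasi-nonexpansive. The derivative $D_af$ is invertible and $f$ is differentiable at $a$. I would try to show that the set of $y\in B_{s_a}(f(a))$ for which $\tilde f_{a,y}$ has a fixed point is both open and closed in the ball.
\begin{itemize}
\item \emph{Openness:} at a preimage point $x$, use differentiability with invertible $D_xf$ together with weak ${\sf A}$ at $x$ on small scales. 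One checks that the linearized map is a contraction near $x$, which forces fixed points for nearby $y$; if this is too weak, fall back on applying the argument with $C_x$ in place of $C_a$.
\item \emph{Closedness:} take $y_n\to y$ with fixed points $p_n\in C_a$. Quasi-nonexpansiveness and strict convexity make each fixed-point set a nonempty closed convex set, by Dotson's result. Combined with ${\sf FPP}$ and reflexivity-type compactness, I would try to extract a limit $p$ with $f(p)=y$, using the continuity of $f$ that differentiability provides.
\end{itemize}
If this route fails, the alternative is to argue that the fixed-point-free case contradicts invertibility of $D_af$ at scales small enough that $\tilde f_{a,y}$ is close to the affine contraction $x\mapsto x-(D_af)^{-1}(D_af(x-a)+f(a)-y)$.
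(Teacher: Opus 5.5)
\textbf{Items (1)--(3) and openness.} Here you follow the paper's argument: Banach's theorem, Schauder--Tychonoff in the weak topology (the paper writes Leray--Schauder), ${\sf FPP}$ for the non-expansive $\tilde f_{a,y}$, and then a small $C_a$ inside a given ball.

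\textbf{The genuine gap: item (4).} The paper handles (4) in the same sentence as (3) and claims that ${\sf FPP}$ yields the fixed point. You are right not to follow it. Weak ${\sf A}$ gives no non-expansiveness. Moreover, the fixed points of $\tilde f_{a,y}$ in $C_a$ are exactly $f^{-1}(y)\cap C_a$, so weak ${\sf A}$ holds automatically at every unattained $y$: it says nothing precisely where you need information.

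But your replacement is only a plan, and its steps fail.
\begin{enumerate}
\item Openness of the set of attained $y$ near $y_0=f(x)$ amounts to local surjectivity at $x$, which is the statement being proved. So it cannot come from ``applying the argument with $C_x$''.
\item Pointwise differentiability at $x$ controls $f(z)-f(x)$, not $f(z)-f(w)$. Hence $\tilde f_{x,y}$ is not a contraction on any neighbourhood of $x$; that would need strict, essentially $\mathsf{C}^1$, differentiability. What you do get, with $A=(D_xf)^{-1}$, is that $\tilde f_{x,y}$ maps $B_r(x)$ continuously into a ball of radius $o(r)$ about $x+(D_xf)^{-1}(y-f(x))$; the linearization is constant. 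In infinite dimensions, continuity plus a small image do not force a fixed point, since balls admit fixed-point-free continuous self-maps. The same objection disposes of your fallback.
\item Closedness also fails. Item (4) assumes neither local weak compactness nor weak continuity of $f$. Even in a reflexive space, a weak limit $p_n\rightharpoonup p$ of preimages of $y_n\to y$ need not satisfy $f(p)=y$ when $f$ is only norm continuous. Dotson's convexity of each fixed-point set does not help you pass to the limit.
\end{enumerate}
What is missing is an actual source of preimages under (4). In effect this is an everywhere-differentiable inverse mapping theorem in infinite dimensions, which is exactly where finite-dimensional proofs use local compactness. Neither your sketch nor the paper's appeal to ${\sf FPP}$ provides it.

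\textbf{Smaller repairs in the parts that work.}
\begin{enumerate}
\item In (2) the hypothesis is ${\sf A}$, not ${\sf LinA}$. A merely norm-continuous nonlinear $A_{a,C_a}$ need not be weak-to-weak continuous. So weak continuity of $\tilde f_{a,y}$ requires $A_{a,C_a}$ to be bounded linear, or at least weakly continuous; the paper is silent on this too.
\item In the openness step, $\mathrm{int}\,C_a\subset V$ only gives $C_a\subset\overline V$. Choose $C_a$ with interior inside some $B_r(a)$ with $\overline{B_r(a)}\subset V$.
\end{enumerate}
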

\begin{proof}[\footnotesize \textbf{Proof}]
For local surjectivity, it suffices to show that for every $a$, and $y\in B_{s_a}(f(a))$ the map $\tilde{f}_{a,y}$ has at least one fixed point. Indeed, this implies $B_{s_a}(f(a)) \subset f(C_a)$. The ${\sf FPP}$ assumption in hypotheses 3 and 4, guarantees the existence of such a fixed point. In hypothesis 1, the Banach fixed point theorem can be applied. In hypothesis 2, the Leray-Schauder fixed point theorem can be invoked. Therefore in all cases, local surjectivity follows. 
\par For openness, we need to show that the image of every open ball around some $a$ contains an open ball around $f(a)$. Consider the open ball $B_{r}(a)$. Since we have assumed property ${\sf A}$ on all scales, there exists a closed bounded convex set $C_a \in \mathscr{C}$ with $C_a \subset B_r(a)$. By the first part, $f(C_a)$ contains $B_{s_a}(f(a))$ which implies that $f(B_r(a))$ contains the ball $B_{s_{a}}(f(a))$. Therefore, $f$ is an open map. 
\end{proof}

The following is by now a standard result. For the definition of normal structure, we will refer the reader to~\cite{BKS}. 
\begin{proposition}\label{prop:key-2}
	Suppose $\B$ has a normal structure (in particular, if it is locally weakly compact (this contains reflexive spaces)) with ${\sf FPP}$. Let $g: C \to C$ be a non-expanding self map of a bounded closed convex set $C$. Then the set of fixed points of $g$ is a non-empty closed convex subset of $C$.
\end{proposition}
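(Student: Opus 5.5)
Nonemptiness is immediate from ${\sf FPP}$: $g$ is a non-expansive, hence continuous, self map of the bounded closed convex set $C$, so $F:=\mathrm{Fix}(g)\neq\emptyset$. Closedness is also immediate: $F$ is the zero set of the continuous map $x\mapsto \|x-g(x)\|$. The whole content of the statement is therefore the convexity of $F$. I would prove it by the standard route through Bruck's retraction results, which the paper already points to in \textsection\thinspace\ref{subsec:qne}.

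\textbf{Step 1: a nonexpansive retraction onto $F$.} Let $\mathcal{N}$ be the family of non-expansive maps $h:C\to C$ with $h(p)=p$ for every $p\in F$ (i.e.\ $F\subset\mathrm{Fix}(h)$). I would topologize $\mathcal{N}$ by pointwise weak convergence. Since $C$ is bounded, closed and convex, and the space is locally weakly compact, $C$ is weakly compact. Tychonoff then makes $\mathcal{N}$ compact, because non-expansiveness and fixing $F$ are preserved under pointwise weak limits by weak lower semicontinuity of the norm. Define the preorder $h\preceq k$ iff $\|h(x)-h(y)\|\le\|k(x)-k(y)\|$ for all $x,y$. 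Zorn's lemma, applied via compactness, gives a minimal element $r$.

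\textbf{Step 2: $r$ is a retraction onto $F$.} I would follow Bruck's argument. For $x\in C$, the set $\overline{\mathrm{conv}}\,$ of the orbit closure $\{h(r(x)):h\in\mathcal{N}\}$ is a bounded closed convex $g$-invariant set. Applying ${\sf FPP}$ to $g$ restricted there (together with normal structure, to control the diameter) and using minimality of $r$ shows that $r(x)\in F$. Concretely, if $r(x)\notin F$, one composes $r$ with a suitable map (e.g.\ an averaged iterate of $g$ or a fixed-point selector of $g$) to produce an element strictly below $r$ in $\preceq$, contradicting minimality. Since $r$ fixes $F$, it follows that $r(C)=F$ and $r$ is a nonexpansive retraction.

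\textbf{Step 3: convexity.} Take $p,q\in F$ and $m=\tfrac12(p+q)$. The segment $[p,q]$ lies in $C$. Define $k=r\circ\pi$, where $\pi$ is a map into $[p,q]$, or more simply test the set $\{z\in C:\|z-p\|\le\tfrac12\|p-q\|,\ \|z-q\|\le\tfrac12\|p-q\|\}$. This set is bounded, closed, convex and nonempty (it contains $m$), and it is $g$-invariant because $g$ is non-expansive and fixes $p$ and $q$. By ${\sf FPP}$ it contains a fixed point. Iterating with dyadic points, or using normal structure (which makes the Chebyshev center of such sets shrink), one concludes that $m\in F$. Closedness of $F$ then upgrades midpoint convexity to convexity.

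I expect Step 2 to be the main obstacle: making Bruck's minimality argument work with only ${\sf FPP}$ plus normal structure rather than uniform convexity. If that fails, the fallback is the direct route of Step 3, i.e.\ showing the midpoint-ball intersections eventually force $m$ itself to be fixed. In the strictly convex case this is trivial, since those intersections reduce to $\{m\}$.
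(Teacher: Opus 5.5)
\textbf{Genuine gap.} There is a genuine gap, and it cannot be closed: under the stated hypotheses the fixed point set need not be convex. Your Step~3 breaks exactly at ``one concludes that $m\in F$''.

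Take $\B=(\mathbb{R}^2,\|\cdot\|_\infty)$. It is finite dimensional, so it is reflexive, has normal structure (compact convex sets have normal structure by Brodskii--Milman), and has ${\sf FPP}$ by Brouwer. Let $C=[-1,1]^2$ and $g(x,y)=(x,|x|)$. Then $\|g(x,y)-g(x',y')\|_\infty=|x-x'|\le\|(x,y)-(x',y')\|_\infty$, so $g$ is a non-expansive self map of $C$. But $\mathrm{Fix}(g)=\{(x,|x|):|x|\le 1\}$ contains $p=(-1,1)$ and $q=(1,1)$ and does not contain $m=(0,1)$.

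In this example your set $S=\{z\in C:\|z-p\|_\infty\le 1,\ \|z-q\|_\infty\le 1\}$ is the segment $\{0\}\times[0,1]$. The map $g$ collapses it to $(0,0)$, so the fixed point that ${\sf FPP}$ gives you in $S$ is $(0,0)$, not $m$. Its Chebyshev center $(0,\tfrac12)$ is not fixed either. In general $S$ is just the set of metric midpoints of $p$ and $q$. So ${\sf FPP}$ only shows that $F$ is metrically convex, and (iterating dyadically and using closedness) that $F$ contains a metric segment from $p$ to $q$; here that segment is the curve $x\mapsto(x,|x|)$. Neither ${\sf FPP}$ nor normal structure collapses $S$ to $\{m\}$; only strict convexity does, as you note in your last sentence.

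\textbf{What Steps 1--2 actually give.} Your Steps~1--2 are a sketch of Bruck's theorem, and that is also all the paper's one-line proof (a reference to Bruck) can deliver. The theorem says: for weakly compact $C$ and hereditary ${\sf FPP}$, $\mathrm{Fix}(g)$ is a non-empty nonexpansive retract of $C$. Note that the weak compactness of $C$ used in your Step~1 needs reflexivity, not merely normal structure. A nonexpansive retract of a convex set need not be convex; the example above is one. So neither your route nor the citation proves the proposition as stated.

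\textbf{How to get a correct statement.} You have two options.
\begin{enumerate}
\item Assume $\B$ strictly convex. Then the midpoint argument gives convexity in two lines, and ${\sf FPP}$ is needed only for non-emptiness.
\item Replace ``convex'' by ``nonexpansive retract of $C$'', hence path connected.
\end{enumerate}
The second option is in fact enough for the way the proposition is used in Lemma~\ref{lemma:key-3}. A path-connected set with two distinct points has no isolated points, which already contradicts discreteness of $f$.
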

\begin{proof}[\footnotesize \textbf{Proof}]
See~\cite{Bruck}.
\end{proof}
\begin{lemma}\label{lemma:2.5}
	If $f$ is differentiable everywhere with an invertible derivative, then it is a discrete map i.e. for every $y$, $f^{-1}(y)$ is a discrete set. 
\end{lemma}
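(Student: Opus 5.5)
The plan is to show that every point of a fibre $f^{-1}(y)$ is isolated in that fibre, using only Fréchet differentiability at that point and the invertibility (boundedness below) of the derivative there. No continuity of $Df$ and no property ${\sf A}$ is needed, so the argument is purely local at a single point.

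Fix $y$ and a point $x_0 \in f^{-1}(y)$, and set $L := D_{x_0}f$. Since $L$ is an invertible bounded linear operator, the bounded inverse theorem makes $L^{-1}$ bounded, so
\[
\|Lh\| \ge c\|h\| \quad \text{for all } h, \qquad c := \|L^{-1}\|^{-1} > 0.
\]
By Fréchet differentiability at $x_0$,
\[
f(x_0+h) - f(x_0) = Lh + \varepsilon(h)\|h\|, \qquad \varepsilon(h) \to 0 \text{ as } h \to 0 .
\]
Choose $\delta > 0$ so small that $B_\delta(x_0) \subset \mathsf{Dom}(f)$ and $\|\varepsilon(h)\| < c/2$ whenever $\|h\| < \delta$. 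This uses the earlier reduction that the domain may be taken open; for general domains one applies the same estimate to a differentiable local extension, which the stated convention supplies.

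For $0 < \|h\| < \delta$ this gives
\[
\|f(x_0+h) - y\| \ge c\|h\| - \tfrac{c}{2}\|h\| = \tfrac{c}{2}\|h\| > 0,
\]
so $f(x_0+h) \neq y$. Hence $B_\delta(x_0) \cap f^{-1}(y) = \{x_0\}$. Since $x_0$ was arbitrary, every point of $f^{-1}(y)$ is isolated, and the fibre is discrete.

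There is no serious obstacle here. The only point that needs care is that "invertible derivative" is used in the sense of a bounded linear isomorphism, so that the lower bound $\|Lh\| \ge c\|h\|$ holds; in infinite dimensions, mere injectivity of $L$ would not suffice. I would also note explicitly that "discrete" here means that every point of the fibre is isolated, not that the fibre is closed or finite. Local finiteness on compacts only comes later, under properness in Corollary~\ref{corol:main-1}.
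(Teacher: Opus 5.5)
Your proof is correct and is essentially the paper's argument: the paper normalizes to $a=0$, $f(0)=0$, $D_0f=\ident$ by composing with linear isomorphisms and then uses $\|f(x)-x\|\le\varepsilon\|x\|$ near $0$. Your lower bound $\|Lh\|\ge c\|h\|$, with $c>0$ supplied by the bounded inverse theorem, does the same job as that normalization (which implicitly composes with the bounded operator $(D_af)^{-1}$), so the two versions are equivalent.
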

\begin{proof}[\footnotesize \textbf{Proof}]
Since discreteness of a map is preserved under compositions with linear isomorphisms, we can assume, without loss of generality, that $a=0$, $y = f(0)=0$ and $D_0f = \ident$.  
\par The definition of differentiability implies that for each $\varepsilon>0$, there exists $r(\epsilon)>0$ such that
\begin{align}\label{eq:1}
	\|f(x) - x\|\le \varepsilon\|x\|, \quad 0<\|x\|< r(\epsilon),
\end{align}
and thus one deduces $f^{-1}(0)\cap B_r(0) = \left\{0\right\}$. This means that every $x \in f^{-1}(y)$ is an isolated point. Thus the inverse images of points are discrete sets.
\end{proof}
\begin{lemma}\label{lemma:key-3}
	Suppose $f$ is a discrete map and for every $C_a \in \mathscr{C}$ with sufficiently small diameter, and for every fixed $y \in B_{s_a}(f(a))$, we know that
	$
	f^{-1}(y) \cap C_a
	$
	is a non-empty closed convex set. Therefore, $f$ is locally injective at $a$. 
\end{lemma}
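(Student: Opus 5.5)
The plan is to combine two facts: a nonempty convex set with more than one point contains a nondegenerate segment, and a discrete set cannot contain one. So each fiber $f^{-1}(y)\cap C_a$ must be a single point, which is exactly injectivity of $f$ on $C_a$.

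Fix $a$ and choose $C_a\in\mathscr{C}$ of sufficiently small diameter, so that the hypothesis applies. To prove local injectivity at $a$, I take as the candidate neighborhood $V_a:=C_a\cap f^{-1}\bigl(B_{s_a}(f(a))\bigr)$. Here I use that $f$ is continuous; this holds in the setting of the Main Theorem because $f$ is differentiable. Then $V_a$ contains $a$, and I shrink it to an open neighborhood of $a$ inside $\mathrm{int}(C_a)$. I claim $f$ is injective on $V_a$.

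Suppose $x_1,x_2\in V_a$ with $f(x_1)=f(x_2)=:y$. Then $y\in B_{s_a}(f(a))$. By hypothesis, $F:=f^{-1}(y)\cap C_a$ is a nonempty closed convex set containing $x_1$ and $x_2$. By convexity, the whole segment $[x_1,x_2]$ lies in $F\subset f^{-1}(y)$.

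Since $f$ is a discrete map, $f^{-1}(y)$ is a discrete set: every point of it is isolated. If $x_1\neq x_2$, the point $x_1$ is not isolated in $[x_1,x_2]$, since the points $x_1+t(x_2-x_1)$ for $t\to0^+$ lie in $f^{-1}(y)$ and converge to $x_1$. This contradicts discreteness, so $x_1=x_2$. Hence $f$ is injective on $V_a$, and in fact on all of $C_a\cap f^{-1}(B_{s_a}(f(a)))$.

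The main obstacle is producing a genuine \emph{neighborhood} of $a$ on which all fibers are controlled by the single set $C_a$. Two points are needed for this.
\begin{itemize}
\item We need $a\in\mathrm{int}(C_a)$. This comes from $\mathscr{C}$ consisting of interiors of neighborhoods $C_a$, together with Definition~\ref{defn:prop-a}.
\item The value $y$ must range over $B_{s_a}(f(a))$, which is exactly where the hypothesis is assumed.
\end{itemize}
If continuity of $f$ were not available, I would instead use $C_a$ itself together with the fact that the hypothesis covers every $y$ attained on $C_a$ near $f(a)$. Alternatively, I would argue directly with the closedness and convexity of fibers on the nested small sets $C_a$ supplied by the all-scales assumption.
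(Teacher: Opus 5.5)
Your argument is correct and is essentially the paper's: two distinct points $x_1\neq x_2$ of $f^{-1}(y)\cap C_a$ force the segment $[x_1,x_2]$ into the fiber by convexity, which contradicts discreteness. You are more careful than the paper, whose proof simply assumes that the common value $y$ of a non-injective pair near $a$ lies in $B_{s_a}(f(a))$; your use of continuity of $f$ (available in the Main Theorem since $f$ is differentiable) to build $V_a$ with $f(V_a)\subset B_{s_a}(f(a))$ supplies exactly this missing step, and some such input is genuinely needed because the hypothesis says nothing about fibers over values outside $B_{s_a}(f(a))$, so your fallback ideas for a discontinuous $f$ would not close the argument.
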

\begin{proof}[\footnotesize \textbf{Proof}]
\par Suppose, on the contrary that $f$ is not locally injective at $a$. Take $y\in B_{s_a}(f(a))$ and $z_1, z_2 \in C_a$ with	 $f(z_1)=f(z_2) = y$ where $z_1 \neq z_2$. According to the convexity hypothesis, we have $[z_1,z_2] \subset f^{-1}(y)$ which contradicts the discreteness of $f$. 
\end{proof}
\par The following is well-known, yet the proof is provided for the sake of thoroughness. 
\begin{lemma}\label{lemma:key-4}
	Suppose $f$ (with an open domain) is a local homeomorphism and is differentiable at $a$ with an invertible derivative. Then, the local inverse $f^{-1}$ is differentiable at $f(a)$ and $D_{f(a)}f^{-1} = (D_af)^{-1}$. 
\end{lemma}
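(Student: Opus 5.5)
We prove the standard statement: if $f$ is a local homeomorphism near $a$ with local inverse $g := f^{-1}$ defined on an open neighbourhood $V$ of $b := f(a)$, and $L := D_af$ is invertible, then $g$ is differentiable at $b$ with $D_b g = L^{-1}$. The only input beyond the definition of the derivative is the continuity of $g$ at $b$, which is part of "local homeomorphism". The plan is to bound $\|g(y)-a\|$ linearly in $\|y-b\|$ and then transfer the $o(\cdot)$ remainder of $f$ at $a$ to a remainder for $g$ at $b$.

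\textbf{Step 1 (linear lower bound).} Write
\[
f(x) = b + L(x-a) + R(x), \qquad \|R(x)\| \le \varepsilon(\|x-a\|)\,\|x-a\|,
\]
where $\varepsilon(t)\to 0$ as $t\to 0$. Put $m := \|L^{-1}\|^{-1} > 0$, so that $\|Lh\|\ge m\|h\|$. Choose $\delta>0$ with $\varepsilon(t)\le m/2$ for $t<\delta$. Continuity of $g$ at $b$ gives a neighbourhood $W\subset V$ of $b$ such that $\|g(y)-a\|<\delta$ for $y\in W$. For $y\in W$ set $x=g(y)$; then $f(x)=y$ and
\[
\|y-b\| \ge m\|x-a\| - \tfrac{m}{2}\|x-a\| = \tfrac{m}{2}\,\|g(y)-a\|.
\]
Hence $\|g(y)-a\|\le \tfrac{2}{m}\|y-b\|$ on $W$.

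\textbf{Step 2 (remainder transfer).} Applying $L^{-1}$ to $y-b = L(x-a)+R(x)$ gives
\[
g(y) - a - L^{-1}(y-b) = -L^{-1}R(g(y)).
\]
Its norm is at most $\|L^{-1}\|\,\varepsilon(\|g(y)-a\|)\,\|g(y)-a\| \le \tfrac{2}{m}\|L^{-1}\|\,\varepsilon(\|g(y)-a\|)\,\|y-b\|$. As $y\to b$ we have $\|g(y)-a\|\to 0$, again by continuity of $g$ at $b$, so $\varepsilon(\|g(y)-a\|)\to 0$. The remainder is therefore $o(\|y-b\|)$, which gives differentiability of $g$ at $b$ with $D_bg = L^{-1} = (D_af)^{-1}$; this operator is bounded since $L$ is an isomorphism.

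The only delicate point is Step 1. One cannot write $g(y)$ via the derivative without first knowing that $g(y)$ is close to $a$, which is why continuity of the inverse is essential; differentiability of $f$ alone is not enough, as Example~\ref{ex:dis-cont}-type discontinuities show. Once continuity of $g$ is in hand from the local homeomorphism hypothesis, the rest is routine.
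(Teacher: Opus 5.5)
Your proof is correct and is essentially the paper's argument. The paper normalizes to $a=f(a)=0$, $D_0f=\ident$, and notes that $\|f^{-1}(y)\|/\|y\|\to 1$; this is your Step~1, and the passage from $y\to f(a)$ to $x\to a$ there tacitly uses the continuity of the local inverse, which you make explicit. It then converts $\|x-f(x)\|=o(\|x\|)$ into $o(\|f(x)\|)$, which is your Step~2.

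One small aside: your closing appeal to Example~\ref{ex:dis-cont} is off-target, since $h_a$ jumps at $a$ and so is not differentiable there. That remark plays no role in your proof.
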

\begin{proof}[\footnotesize \textbf{Proof}]
	Without loss of generality, let us assume $a=f(a)= 0$ and $D_0f = \ident$. Then we have
	\[
	\lim_{y=f(x) \to f(a)}\frac{\|f^{-1}(y)\|}{\|y\|}=\lim_{x \to 0} \frac{\|x\|}{\|f(x)\|} = 1.
	\]
	Hence,
	\begin{align}
		\|x - f(x)\| = o(\|f(x)\|), \quad \text{as $\|f(x)\| \to 0$},
	\end{align}	
	and the claim follows. 
\end{proof}
\subsection{Proof of the main results}
\subsubsection{\small \textbf{\textsf{Proof of Theorem~\ref{thrm:main-1}}}}
\par The openness (and consequently the local surjectivity) follows from Lemma~\ref{lemma:key-1}. We only need to argue the local injectivity of $f$; since this implies that $f$ is a local homeomorphism. Thus, by Lemma~\ref{lemma:key-4}, it is a local diffeomorphism. 
\par In \emph{\small Statement (1)}, the local injectivity is guaranteed by the uniqueness of he fixed point in the Banach fixed point theorem. 
\par In \emph{\small Statements (2), (3)}, the local injectivity follows from the discreteness established in Lemma~\ref{lemma:2.5} together with Proposition~\ref{prop:key-2} and Lemma~\ref{lemma:key-3}.
\par In \emph{\small Statement (4)}, the local injectivity follows from Lemma~\ref{lemma:key-3} in combination with the fact that in strictly convex spaces, the set of fixed points of quasi-nonexpansive maps form a convex set, see~\textsection\thinspace\ref{subsubsec:fp-qn}. 
\qed
\subsubsection{\small \textbf{\textsf{Proof of the Corollary~\ref{corol:main-1}}}}
\subsubsection*{\small Statement (1)}
Properness and being a local homeomorphism ensure that the inverse image of each point is a finite set which in turn implies that such a point admits a neighborhood that is evenly covered. Consequently $f$ has the unique lifting property, see the proof of~\cite[Proposition 1.30]{Hatch}. 
\par A local homeomorphism with the unique lifting property with a path connected domain and locally simply connected target, is a covering map; e.g. see~\cite[Section 5.6, Proposition 6]{Doc}. Applying the above argument to path connected components of $U$ completes the proof of the claim.
\subsubsection*{\small Statement (2)}
The claim directly follows from the topology fact that a local homeomorphism from a compact Hausdorff space to a Hausdorff space is a covering map.
\qed
\subsubsection{\small\textbf{\textsf{Proof of the Corollary~\ref{corol:main-2}}}}
This follows from the Main Theorem in combination with the results in~\cite{Play}.
\qed
\subsubsection{\small\textbf{\textsf{Proof of the Corollary~\ref{corol:main-3}}}}
\par Let $x$ be fixed. By the qualifier ``locally uniform on all scales'', there exists a neighborhood $U_x$ such that $\alpha_a, \beta_a >0$ can be taken to be the same constants for all $a \in U_x \cap \mathcal{R}$.  
\par Since $\mathcal{R}$ is dense, we can choose $a \in \mathcal{R}$ arbitrarily close to $x$. Now for sufficiently small $r$, there exists $a\in \mathcal{R}$ and $C_a$ with $ \eta_a r \le \diam(C_a)\le \gamma_ar$ such that $B_{2\gamma_ar}(x)$ contains $C_a$. Consequently by the proof of Lemma~\ref{lemma:key-1}, $f(B_{2\gamma_ar}(x))$ contains $B_{\alpha_a\eta_ar}(f(a))$ which contains $B_{\frac{1}{2}\alpha_a\eta_ar}(f(x))$ provided we had chosen $a$ with $\|f(x)-f(a)\| \le \frac{1}{2}\alpha_a\eta_ar$. 
\par Thus openness (thus also local surjectivity) follows. 
  \qed
\subsubsection{\small\textbf{\textsf{Proof of the Corollary~\ref{corol:main-4}}}}
\par Again local surjectivity and openness follows by a similar argument as in the proof of Corollary~\ref{corol:main-3}. The local injectivity is argued as follows. Suppose $f$ is not locally injective. Thus there exists $a$ and $C_a$ and $y$ such that the fixed points of $\tilde{f}_{a,y}$ form a bounded closed convex set that is not a singleton. In particular, it includes a line segment. The directional derivative of $f$ along any line segment and in the direction of the line segment exists and vanishes. This contradicts both ``non-degenerate directional derivatives'' and ``being segmentally non-degenerate''.
\qed

\subsubsection{\small\textbf{\textsf{Proof of the Theorem~\ref{thrm:main-2}}}}
\par Consider the map
	\[
	\mathcal{F}: \B_1 \times \B_2 \to \B_1\times \B_2,
	\]
	given by
	\[
	\mathcal{F}(t,x) = \left(t, g(t,x)\right).
	\]
	Note
	\[
	D_{(t,x)}\mathcal{F} = \begin{pmatrix} \ident &  0 \\ D_tg & D_xg\end{pmatrix}.
	\]
	
\par Since $D_xg$ is invertible, we deduce $D_{(t,x)}\mathcal{F}$ is invertible as well. 
	
\par According to the key pairing Lemma (Lemma~\ref{lem:pair}), $\mathcal{F}$ satisfies the same hypothesis of the Theorem~\ref{thrm:main-1} as $g$ does. Thus by Theorem~\ref{thrm:main-1}, we deduce $\mathcal{F}$ is a local diffeomorphism. Let $\mathcal{F}^{-1}$ be the ``unique'' local inverse around $(0,c) \in \B_1\times \B_2$ whose image contains a neighborhood of $(0,b)$. 
	
\par It is straightforward to see that 
	\[
	\mathcal{F}^{-1}(t,y) = \left(t, k(t,y)\right),
	\]
	for some $k:\B_1\times\B_2 \to \B_2$ thus $k(t,g(t,x)) = (t,x)$. This means locally $g^{-1}(c)$ is the graph of the unique function $t \mapsto k(t,c)$. 
\qed
\section{An application in abstract differential equation systems}\label{sec:application}

\noindent\underline{\small \emph{\textbf{Notation:}}} For simplicity, in what follows, a function between two Banach spaces means a partial function i.e. a function defined on an open subset and this is signified by using the notation
$\DashedArrow$.\newline
\noindent\underline{\small \emph{\textbf{Convention:}}} We will suppress mentioning the domains of functions. The common sense assumptions when dealing with an pde are in order. 

\par Suppose $\B_1$ and $\B_2$ are two Banach spaces. We use variables $t$ and $x$ for elements of $\B_1$ and $\B_2$ (resp.). 

Let
\[
\mathcal{G}:= \left\{g: \B_1 \times \B_2 \DashedArrow \B_2 \; \;\text{\textbrokenbar}\; \text{$D_xg$ is invertible} \right\}.
\]
We furtheremore assume for each $t$, $g(t,\cdot)$ and the Banach space $\B_2$ satisfy either of the hypotheses in Theorem~\ref{thrm:main-1}. Consider the function space
\[
\mathcal{H}:= \left\{f: \B_1 \times \B_2 \DashedArrow \B_2 \; \;\text{\textbrokenbar}\; f(t,x) := \left[D_xg(t,x)\right]^{-1} D_tg(t,x) \quad \text{for some} \quad g \in \mathcal{G}\right\},
\]
and note that $f\in \mathcal{H}$ is neither necessarily Lipschitz nor even necessarily continuous. 

\par Consider the abstract differential equation (system)
\begin{align}\label{eq:abs-ode}
\begin{cases} u'(t) = f(t,u(t)), \quad f \in \mathcal{H}\\
	u(0) = b\in \B_1
\end{cases}.
\end{align}
The following theorem is a generalization of the standard existence and uniqueness theorem to equations where the RHS is not necessarily Lipschitz or even continuous though belong to $\mathcal{H}$

\begin{theorem}
	The abstract pde system \eqref{eq:abs-ode} has a unique solution locally given by
	\[
	g(t,u(t)) = c = g(0,b).
	\]
\end{theorem}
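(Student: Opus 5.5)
The plan is to read the statement as a direct consequence of the generalized implicit function theorem (Theorem~\ref{thrm:main-2}). Existence comes from the graph function it produces. Uniqueness comes from showing that every solution keeps $g$ constant along its graph, and then using the local uniqueness part of Theorem~\ref{thrm:main-2}.

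\textbf{Setup.} Fix $f\in\mathcal{H}$ with its generating $g\in\mathcal{G}$. Here $t\in\B_1$, so $u'(t)$ is a linear map $\B_1\to\B_2$, just like $D_tg$. The initial datum $b$ should lie in $\B_2$. Put $c:=g(0,b)$, so that $(0,b)\in g^{-1}(c)$. The standing hypotheses are that $D_xg$ is invertible and that $g$ satisfies one of the hypotheses of Theorem~\ref{thrm:main-1}; these transfer to $\mathcal{F}(t,x)=(t,g(t,x))$ through the pairing Lemma~\ref{lem:pair}. Hence Theorem~\ref{thrm:main-2} applies, and near $(0,b)$ the level set $g^{-1}(c)$ is the graph of a differentiable $h=k(\cdot,c)$ on a neighbourhood $U_0$ of $0$, with $h(0)=b$. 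Set $u:=h$.

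\textbf{Existence.} Since $g$ is everywhere differentiable, the chain rule applies to $t\mapsto g(t,u(t))\equiv c$ and gives
\[
D_tg(t,u(t))+D_xg(t,u(t))\,u'(t)=0,
\]
so $u'(t)=-[D_xg(t,u(t))]^{-1}D_tg(t,u(t))$. This has the opposite sign from the stated $f$, so a sign convention must be fixed. Either the class $\mathcal{H}$ is read with $f=-(D_xg)^{-1}D_tg$, or one absorbs the sign by passing from $g$ to a modified generator. One has to check that such a modified generator still lies in $\mathcal{G}$ and satisfies the same hypotheses; I expect the paper intends the former reading. With the sign fixed, $u$ solves \eqref{eq:abs-ode} and satisfies $g(t,u(t))=c=g(0,b)$.

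\textbf{Uniqueness.} Let $v$ be any differentiable solution on a convex neighbourhood of $0$ with $v(0)=b$, and set $\phi(t):=g(t,v(t))$.
\begin{enumerate}
\item By the chain rule and the equation, $D\phi(t)=D_tg+D_xg\,v'(t)=0$ everywhere.
\item To conclude that $\phi$ is constant with no continuity assumption on $D\phi$, fix $t$ and a functional $\ell\in\B_2^*$, and look at the real function $s\mapsto\ell(\phi(st))$ on $[0,1]$. It is differentiable with zero derivative, so it is constant by the one-variable mean value theorem. By Hahn--Banach, $\phi(t)=\phi(0)=c$.
\item Since $v$ is continuous, $(t,v(t))$ stays in the neighbourhood of $(0,b)$ where Theorem~\ref{thrm:main-2} describes $g^{-1}(c)$ as a graph, for $t$ close to $0$. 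Local uniqueness of that graph then forces $v(t)=h(t)=u(t)$.
\end{enumerate}

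\textbf{Main obstacle.} I expect the hardest step to be the uniqueness step (3), namely the meaning of ``locally uniquely'' in Theorem~\ref{thrm:main-2}. That theorem rests on $\mathcal{F}$ being a local homeomorphism. So I would argue directly that $\mathcal{F}(t,v(t))=(t,c)=\mathcal{F}(t,u(t))$, that both points lie in the neighbourhood on which $\mathcal{F}$ is injective, and hence that $v(t)=u(t)$. Continuity of $v$, together with $v(0)=u(0)=b$, is exactly what keeps both curves inside that injectivity neighbourhood for small $t$.
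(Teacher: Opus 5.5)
Your proposal is correct and follows the paper's route: the chain rule puts every solution on the level set $g^{-1}(c)$, and the graph description from Theorem~\ref{thrm:main-2} then gives existence and local uniqueness. You also fill in the two steps the paper skips, namely constancy of $\phi$ (via functionals and the mean value theorem) and the argument that both curves stay in the neighbourhood where $\mathcal{F}$ is injective. You are right about the sign: the paper's identification of $D_xg\circ u'-D_tg$ with $D_t\bigl(g(t,u(t))\bigr)$ is off by a sign, as $g(t,x)=x+t$ shows (it gives $u'=1$, whose solution $b+t$ is not on $g^{-1}(g(0,b))$). So the statement holds only with $f=-(D_xg)^{-1}D_tg$, which is the reading you need; your \emph{modified generator} alternative is not a real option, since finding such a generator amounts to solving the equation.
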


\begin{proof}[\footnotesize \textbf{Proof}]
\par Let us rewrite the PDE \ref{eq:abs-ode}. From
\[
u'(t) = \left[D_xg(t,u(t))\right]^{-1} D_tg(t,u(t)),
\]
(this is a pde since $t$ comes from a multi-dimensional space)
we deduce
\[
D_xg(t,u(t))\circ u'(t) - D_tg(t,u(t)) = 0.
\]
which by virtue of the chain rule, the latter is just
\[
D_t\left( g(t,u(t))  \right) = 0.
\]
Thus any possible solution lies on a the level set $g^{-1}(c)$. 

\par By the IFT proven in Theorem~\ref{thrm:main-2}, level set $g^{-1}(c)$ around any of its points is locally the graph of a unique function $t \mapsto u(t)$. It is then obvious that $u(t)$ solves the system \eqref{eq:abs-ode} and is unique since by IFT, locally it is uniquely determined. 
\end{proof}

\end{document}